\pdfoutput=1 
\RequirePackage{fixltx2e}
\documentclass[10pt]{article}
\usepackage[utf8]{inputenc}
\usepackage[T1]{fontenc}
\usepackage{amsmath, amsfonts, amsthm, amssymb, amscd, aliascnt}
\usepackage{a4wide}
\usepackage[normalem]{ulem}
\usepackage{relsize}
\usepackage{cancel}

\usepackage{xr} 

\usepackage[mathcal]{euscript} 
\usepackage{mathrsfs} 
\usepackage{accents, verbatim}
\DeclareMathAlphabet\mathbfcal{OMS}{cmsy}{b}{n}

\input{epsf}
\usepackage{pdfpages, enumerate, graphicx, xcolor, array, booktabs, microtype}
\usepackage{fancyhdr}
\usepackage[nottoc]{tocbibind}
  
\usepackage[pdfusetitle]{hyperref}
\hypersetup{linktoc = all}
\hypersetup{hidelinks}
\hypersetup{bookmarksnumbered}

\numberwithin{equation}{section}
\numberwithin{table}{section}
\numberwithin{figure}{section}

\usepackage{footnotebackref}

\setlength{\headheight}{-8.5pt}

\setlength{\arraycolsep}{\glueexpr\arraycolsep/2\relax} 
 
\theoremstyle{plain}
        \newtheorem{theorem}{Theorem}[section]  

        \newtheorem{proposition}[theorem]{Proposition}

        \newtheorem{lemma}[theorem]{Lemma}
        \newtheorem{corollary}[theorem]{Corollary}
        \newtheorem{remark}[theorem]{Remark}

\numberwithin{equation}{section}
\let\oldmarginpar\marginpar
\renewcommand\marginpar[1]{\- \oldmarginpar[\raggedleft\footnotesize #1]%
{\raggedright\footnotesize #1}}
%

\newcommand \Lbb{\mathbb L}

\newcommand \Acal {\mathcal A}
\newcommand \Bcal {\mathcal B}
\newcommand \Ccal {\mathcal C}

\newcommand \bei {\begin{itemize}}
\newcommand \eei {\end{itemize}}

\newcommand \eps \epsilon
\newcommand \Ewavek {E_{\text{W}}^{k}}
\newcommand \Ewaveak {E_{\text{W}}^{a,k}}
\newcommand \Nwaveak {N_{\text{W}}^{a,k}}
\newcommand \Nwaveaktwo {N_{\text{W}}^{a,k+2}}
\newcommand \Eoneak {E_r^{a,k}}
\newcommand \Eonea {E_r^a}
\newcommand \EoneaN {E_r^{a,N}}
\newcommand \EtwoaN {E_{s}^{a,N}}

\newcommand \Etwoak {E_{s}^{a,k}}
\newcommand \Etwoa {E_{s}^a}
\newcommand \Etwozero {E^{0}_{s}}
\newcommand \Etonezero {E^{0}_{r}}

\newcommand \Eoneaktwo {E_r^{a,k+2}}
\newcommand \Etwoaktwo {E_{s}^{a,k+2}}

\newcommand \Ewavea {E_{\text{W}}^a}
\newcommand \Ewave {E_{\text{W}}}
\newcommand \EKG {E_\text{KG}}

\newcommand \Kint {\mathcal{K}_{\text{int}}}
\newcommand \Kwave {\mathcal{K}_{\text{W}}}

\newcommand \baru {\underline u}
\newcommand \Kcal {\mathcal K}
\newcommand \be {\begin{equation}}
\newcommand \ee {\end{equation}}
\newcommand \bel {\be \label}
\newcommand \del \partial
\newcommand \RR{\mathbb R}
\newcommand \diag {\text{diag}}
\newcommand \Hcal {\mathcal H}

\newcommand{\lp}[2]{\Vert \, #1 \, \Vert_{#2}}

\newcommand{\td}{\widetilde}

\newcommand{\snabla}{ {\, {\slash\!\!\! \nabla  }} }

\newcommand{\jap}[1]{ {\langle #1 \rangle} }

%



\newcommand \delu {\underline{\del}} 
 \renewcommand \Phi{\widetilde\phi}
 \newcommand \Vol  {dV}

\linespread{1.}  
 
%
%
\usepackage{perpage}
\MakePerPage{footnote} 
\textwidth = 510pt
\oddsidemargin = -18pt
\evensidemargin = -18pt
\headheight = -20.2pt  


\begin{document}
 
\title{Boundedness of the conformal hyperboloidal energy\\ for a wave-Klein-Gordon model}

\author{Philippe G. LeFloch\texorpdfstring{$^a$},, 
Jes\'us Oliver\texorpdfstring{$^b$},, 
and Yoshio Tsutsumi\texorpdfstring{$^c$}, 
{}}

\date{October 2022}

\maketitle
\footnotetext[1]{Laboratoire Jacques-Louis Lions \& Centre National de la Recherche Scientifique,
Sorbonne Universit\'e,
4 Place Jussieu, 75252 Paris, France. Email: {\tt contact@philippelefloch.org}.
\\
\indent 
$^b$ Department of Mathematics, California State University, East Bay, 
25800 Carlos Bee Blvd., Hayward, CA 94542, USA.
Email: {\tt jesus.oliver@csueastbay.edu}
\\
\indent $^b$ Institute of Liberal Arts and Sciences,
Kyoto University, 
Yoshida Nihonmatsu-cho, 
Sakyo-ku,
Kyoto 606--8501, Japan.
Email: {\tt tsutsumi@math.kyoto-u.ac.jp}
}

\begin{abstract} 
\mbox{} 
We consider the global evolution problem for a model which couples together a nonlinear wave equation and a nonlinear Klein-Gordon equation, and was independently introduced by LeFloch and Y. Ma and by Q. Wang. 
By revisiting the Hyperboloidal Foliation Method, we establish that a weighted energy of the solutions remains (almost) bounded for all times. The new ingredient in the proof is a hierarchy of fractional Morawetz energy estimates (for the wave component of the system) which is defined from two conformal transformations. The optimal case for these energy estimates 
corresponds to using the scaling vector field as a multiplier for the wave component.
\end{abstract}

  
\setcounter{secnumdepth}{3}


\section{Introduction} 
\label{section1}

\paragraph{Aim of this paper.}

We are interested in the global-in-time existence problem for coupled systems of wave-Klein-Gordon  equations and we study a typical system that arises as a simple model formally derived from the Einstein equations of general relativity in presence of massive 
matter fields. We are interested in spatially compactly supported solutions and in analyzing their decay in time, 
and therefore our analysis focus on the interior of a light cone region. 
Throughout, we work in Minkowski spacetime $\RR^{3+1} :=(\RR^{4},\eta)$ where 
$\eta := \diag(-1,1,1,1)$ denotes the Minkowski metric in inertial coordinates denoted by
$(x^0, x^1, x^2, x^3) = (t,x) = (t,x^i) \in \RR^4$ ($i=1,2,3$). 
The level sets of the time function $t : \RR^{3+1} \mapsto \RR$ are uniformly spacelike and provide natural hypersurfaces on which the initial value problem can be posed. On the other hand, an alternative standpoint is provided by the slices (with $s \geq s_0>1$ for notational convenience) 
\bel{equa-hyperfol} 
\Hcal_s := \big\{(t, x) \, / \, t >0; \, t^2 - r^2 = s^2 \big\}, 
\ee
in which we used the notation $r^2 := |x|^2 = (x^1)^2 + (x^2)^2 + (x^3)^2$. These slices define a foliation by spacelike hypersurfaces of (a subset of) the 
interior of the future light cone from the origin. 

In the present paper, we build upon pioneering works using the hyperbolic foliation \eqref{equa-hyperfol} 
together with earlier works on the global existence problems for wave equations \cite{Hormander,Klainerman87}
and Klein-Gordon equations \cite{Georgiev,Katayama12a,Katayama12b,Klainerman85,LSoffer,LSogge,OzawaTsutayaTsutsumi,Tataru1}. 
Specifically, we follow the presentation in the monograph \cite{PLF-YM-book} by LeFloch and Ma who coined the word ``hyperboloidal foliation method" to denote a strategy which leads to global existence results for coupled wave-Klein-Gordon systems possibly on a curved geometry, and combines several ingredients motivated by the earlier works cited above: 
\bei

\item[(i)] a notion of {\sl weighted functional norm} based on the Lorentz boosts, 

\item[(ii)] a notion of {\sl semi-hyperboloidal frame}  
containing tangent vectors to the hyperboloids, 

\item[(iii)] {\sl weighted Sobolev} and {\sl Hardy inequalities} on hyperboloids, 

\item[(iv)] {\sl sharp pointwise} estimates in sup-norm for wave and Klein-Gordon equations, and 

\item[(v)] nonlinear coupling arguments based on a {\sl hierarchy} enjoyed by the wave-Klein-Gordon system.
\eei
\noindent 
The method in  \cite{PLF-YM-book} overcomes the difficulty that solutions to wave and Klein-Gordon equations have different decay in time, and that the scaling vector fields does not commute with the Klein-Gordon operators, so that the natural choice of weighted norms is too small for applying the standard vector field techniques. This approach was later applied and generalized in \cite{PLF-YM-1} where the spatial decay of solutions is investigated. This line of research was motivated by the study of the Einstein equations in the regime near the (vacuum) Minkowski spacetime. 


\paragraph{The system of interest.}

Specifically, we study here the following coupled system consisting of two wave and Klein-Gordon equations, referred to as 
the LeFloch-Ma-Wang model \cite{PLF-YM-1,Wang1}: 
\bel{eq main}
\aligned
& - \Box u = P^{\alpha\beta}\del_\alpha v\del_\beta v + Rv^2,
\qquad 
 - \Box v + c^2 v  =  u  H^{\alpha\beta} \del_\alpha\del_\beta v,
\endaligned
\ee
in which $\Box := - \del_t^2 + \Delta_x$ denotes the wave operator on $\RR^{3+1}$ 
and $P^{\alpha\beta}$, $H^{\alpha\beta}$, $R$, and $c \neq 0$ are constants.
The unknowns $u, v$  are scalar fields defined over Minkowski space $\RR^{3+1}$, 
and \eqref{eq main} is supplemented with initial data
$u_0, u_1, v_0, v_1$ prescribed on the hyperboloid $\Hcal_{s_0} \subset \RR^{3+1}$:  
\bel{initialdata}
\aligned
& u|_{\Hcal_{s_0}} = u_0, \quad && \del_t u|_{\Hcal_{s_0}} = u_1,
\qquad 
& v|_{\Hcal_{s_0}} = v_0, \qquad && \del_t v|_{\Hcal_{s_0}} = v_1. 
\endaligned
\ee 
Provided these data are sufficiently small and regular, the existence of a global-in-time solution $(u,v)$
to \eqref{eq main}- \eqref{initialdata} was established in \cite{PLF-YM-1}. The same problem was studied by Wang~\cite{Wang1} and Ionescu and Pausader \cite{IonescuPausader0} in a different functional framework.  

Our objective in the present paper is to revisit their result in a {\sl fractional weighted energy norm,} motivated from the Morawetz estimate and the conformal energy. Namely, we will rely on two conformal transformations of the equations under consideration and, in turn, in comparison to \cite{PLF-YM-1} we will establish an improved time decay estimate, namely the (almost) {\sl boundedness of the energy} for all times. In other words, we improve upon the conclusion of LeFloch-Ma's theorem, but at the expense of imposing a stronger spatial decay on the initial data. Our proof builds upon recent work by Oliver and Sterbenz \cite{OliverSterbenz} and Oliver \cite{Oliver}, which provided an extension of the standard vector field method. The issue of the boundedness of the energy was also recently established for a variety of models in 
\cite{DIPP,DLLei,PLF-CW}. 

\paragraph{Motivations from the Einstein equations.}

Recall that, in \cite{PLF-YM-1}, the model \eqref{eq main} was ``extracted'' from the Einstein the equations of general relativity since it provided a simple, yet non-trivial, example of coupling between a wave equation and a Klein-Gordon equation. For a discussion of the Einstein equations, we refer to \cite{PLF-YM-3}--\cite{PLF-YM-Third--Part} as well as \cite{Bigorgne2,IonescuPausader1,Wang1}. Furthermore, recall that a different strategy was proposed by Ionescu and Pausader \cite{IonescuPausader1}, who also successfully treated \eqref{eq main} by using Fourier resonance techniques and also imposed a condition on  the $1/2$-derivatives of the wave component of the data. 
(Their regularity condition is reminiscent of our condition with the exponent $a=1/2$; see below.) For further results concerning the nonlinear coupling between wave and Klein-Gordon equations, we refer to \cite{KriegerSterbenz,OhTataru} and the references cited therein.  For other important related developments, see \cite{FJS3,HintzVasy2,HuneauStingo,IfrimStingo}. 


\paragraph{Notation and outline.}

Throughout, we write $A \lesssim  B$ (resp. ``$A\ll B$"; ``$A\approx B$") if $A \ \leqslant \ C B$ for some fixed $C>0$ which may change from line to line (resp. $A\leqslant \eps B$ for a small $\eps$; both $A\lesssim B$ and $B\lesssim A$). Our notation for the range of the indices is as follows: 
$\alpha,\beta, \ldots = 0,1,2,3$, while $i,j, \ldots = 1,2,3$ and $b,c \ldots = 1,2$.
We make use of Einstein summation convention throughout, and always rely on the Minkowski metric $m$ to raise or lower indices. The notation $m=m^{-1} = \diag(-1,1,1,1)$ stands for the Minkowski metric in $(t,x^i)$ coordinates.

An outline of this paper is as follows. In Section~\ref{section2}, we present our global energy bounds for the model, as well as an estimate for the wave equation which is of interest for its own sake. We proceed by deriving, in  Section~\ref{section3}, a general conformal energy identity satisfied by solutions to the wave equation. In Section~\ref{section4}, we then introduce Bondi coordinates and establish several tensorial identities. This leads us, in  Section~\ref{section5}, to our main fractional Morawetz energy estimate. In Section~\ref{section6}, the framework is finally applied to the model of interest. 


\section{Statement of the main results} 
\label{section2}

\subsection{The hyperboloidal foliation}

\paragraph{Spacetime slices.}

We need some additional notation and, by following  \cite{PLF-YM-1}, we introduce the interior of the future light cone with vertex $(1,0,0,0)$, namely 
\be
\Kcal : = \big\{ (t,x^i ) \, / \,  r < t-1 \big\},
\ee
while we foliate the interior of the cone $\big\{ (t,x^i) \, /  r<t \big\}$ from the origin $(0,0,0,0)$
by the spacelike hypersurfaces of constant hyperboloidal time $s$
\be
\Hcal_s := \big\{ (t,x^i ) \, / \, t>0, \,   t^{2}- r^{2} =s^{2 } \big\}. 
\ee
We have fixed some $s_0>1$ and for any pair $s_0 \leq s_1 < s_2$, we introduce the following sub-domain of $\Kcal$ bounded by two hyperboloids: 
\be
\Kcal_{[s_1,s_2]} := \big\{ (t,x^i ) \, / \, s_1^{2}\leq t^{2}- r^{2}\leq s_2^{2} \ , r<t-1 \big\} 
\subset \Kcal.
\ee
Within $\Kcal$ we single out the \textit{interior domain} 
\be
\Kint := \big\{ (t,x^i ) \, / \,  r <  t/2 \big\} \ \subset \Kcal.
\ee
More precisely, the \textit{wave domain} (i.e. essentially the support of the solutions to the wave equation considered below) 
is defined to be the complement in
$\Kcal$ of the domain $\Kint$, namely 
\be
\Kwave := \big\{ (t,x^i ) \, / \, (t/2)  \leq r \leq t-1 \big\} \subset \Kcal.
\ee

\paragraph{Vector fields.}

We define the \textit{semi-hyperboloidal frame} $\delu = (\delu_\alpha) =(\underline \del_0,\delu_1,\delu_2,\delu_3)$ to be
\bel{semih_frame}
\delu_{0} := \del_t, 
\qquad
\delu_{i} := \del_{i}+ \frac{x^i }{t}\del_t,
\ee
and we also set $\delu_{x} :=(\delu_1,\delu_2,\delu_3)$. Observe that the vector fields $\delu_i$ generate the tangent
space to the hyperboloids of constant $s$. We also further decompose
Span$\{\delu_i \}_{i=1,2,3}$ into radial and angular components 
$(\underline\partial_r,\snabla)$ with
\be
\delu_{r} := \del_r+ {r \over t} \del_t,
\ee
while $\snabla$ denotes derivatives tangential to the surfaces with constant $t-r$ and $r$.


\subsection{Functional spaces of interest}

\paragraph{Standard energy norms.}

On any spacelike hypersurface $\Hcal_s$ of constant hyperboloidal radius $s= \sqrt{t^{2}- r^{2}}$,
we use the $L^2$ norm 
\be
\lp{\phi (s)}{L^2(\Hcal_s)} 
: = \Big(\int_{\Hcal_s }\phi^{2} \, dx \Big)^{1/2},
\qquad s \geq s_0,
\ee
where $dx=dx^1 dx^2 dx^3$ is the standard Euclidean volume form. Observe that $dx$ differs from the natural volume form on hyperboloids (which is $\Vol =(s/t) dx$); the above notation will turn out to be convenient in our analysis. 
On any given hypersurface $\Hcal_s$, the standard \textit{hyperboloidal energy} of wave and Klein-Gordon fields, respectively, read\footnote{after normalization of the Klein-Gordon mass. } 
\begin{align}
& \lp{\phi}{\Ewave} 
: = \lp{\frac{s}{t}\del_t \phi}{L^2(\Hcal_s)} + \lp{ \underline \del_{x}\phi}{L^2(\Hcal_s)},  
&& \phi  = \phi(s), 
\label{wave_stenergy}
\\
& \lp{\psi}{\EKG} 
:= 
\lp{\frac{s}{t}\del_t \psi}{L^2(\Hcal_s)}+
\lp{ \underline \del_{x}\psi}{L^2(\Hcal_s)}
+ \lp{\psi}{L^2(\Hcal_s)}, 
&& \psi = \psi(s). 
\label{energy_kg}
\end{align}


\paragraph{Weighted energy norms.}

Next, let us define the so-called optical functions\footnote{It will be clear from the context whether $u$ denotes the Minkowski optical function $t- r$ or the wave component of \eqref{eq main}.}
$
u := t- r$
and  
$\baru := t+r,$
which is now used in defining our weights as follows: 
\be
\tau_- := \jap{u}, \qquad
\tau_+ := \jap{\baru}, \qquad
\tau_0 := \tau_-/\tau_+.
\ee
Observe that $\tau_-$ can be regarded as a distance to the light cone, while $\tau_+$ provides a decay weight in, both, timelike and spacelike directions. 
Here, for any function $f$ we use the notation $\jap{f} =(1+f^{2})^{1/2}$ for the so-called Japanese bracket. 
The, given an arbitrary exponent $a \in [0,1]$, we intrduce the \textit{weighted hyperboloidal energy} of the wave component $\phi= \phi(s)$ by
\be
\lp{\phi}{\Ewavea}
:= 
\lp{ \big(1+a\tau_+^a \tau_0^{\max(a, 1/2)} \big)  \frac{s}{t}\del_t \phi}{L^2(\Hcal_s)}+
\lp{s^a  \underline \del_{x}\phi}{L^2(\Hcal_s)}
+a \, \lp{\frac{s^a}{t}\phi}{L^2(\Hcal_s)},
\label{frac_energy}
\ee
where, by definition, $E_{W}^{0} =E_{W}$. 
In addition, to deal with a source term $F$ in the wave equation, we will use a spacetime weighted norm denoted by the subscript $N_{W}^a $, given by 
\be
\lp{F}{N_{W}^a [s_0,s_1]}  
: =
\int_{s_0}^{s_1}\lp{\tau_{+}^{a} \frac{s}{t} \, F(s)}{L^2(\Hcal_s)} \, ds.
\ee


\paragraph{High-order norms.}

We now define versions of the  norms which incorporate additional decay based on the translations and Lorentz boosts of Minkowski space. In the $(t,x^i)$ coordinates, we define the Lie algebra of translations and boosts as follows (with $\alpha=0,1,2,3$ and $i=1,2,3$): 
\bel{def_vfields}
\Lbb  
:=  \big\{ \del_{\alpha} \big\} \cup \big\{ L_{i} = t\del_{i}+x^i \del_t   \big\}.
\ee  
Given any multi-indices $I=(I_\alpha)$ and $J=(J_i)$, we use the standard notation
$\del^{I} = \del^{I_0}_0 \del^{I_1}_1\del^{I_2}_2 \del^{I_3}_3$ and 
$L^J  = L^{J_1}_1 L^{J_2}_2 L^{J_3}_3$, and we also set 
$|I| := I_0+I_1+I_2+I_3$ and $|J| := J_1+J_2+J_3$. 
We define the {\it high-order energy norms} of interest on any given hyperboloid $\Hcal_s$ as 
\be 
\lp{\phi}{\Ewaveak}   :=   \sum_{|I|+|J|\leqslant k} \lp{\del^{I}L^J  \phi}{\Ewavea},   
\quad\qquad \phi = \phi(s)
\ee
as well as, for a source term in the wave equation, the spacetime norm
\be
\lp{F}{N^{a,k} [s_0,s_1]}   :=  \sum_{|I|+|J|\leqslant k}
\lp{\del^{I}L^J  F}{N_{W}^a [s_0,s_1]}. 
\ee
Similar definitions are used the remaining norms. 


\paragraph{Conformal norms.}

Finally, let us introduce the Bondi-like derivative operators
$
\del^B_{u}:= \del_t$
and $ 
\del^B_{r}:= \del_t+ \del_r$,
which are well-defined except at the center $r=0$. We will work with two choices of conformal weight denoted by $\Omega$, namely 
\be 
\Omega_{r} := r, 
\qquad 
\Omega_{s} := s^{2} = u\baru. \label{weights_definition}
\ee
For each parameter value $a \in [0,1]$, we associate the 
\textit{modified weighted energy} of a function $\phi = \phi(s)$ in Bondi coordinates defined by
\bel{conj_conf_en1}
 \lp{r\phi}{ \Eonea}
: =  \lp{\big( 1+ a\tau_+^a  \tau_0^{\max(a,1/2)} \big) 
 \Big( 
\frac{s}{t}\frac{\del^B_u(r\phi)}{r}, \snabla\phi \Big)}{L^2(\Hcal_s)} + \lp{\tau_+^a  \big(\frac{\del^B_{r}
 (r\phi)}{r},\frac{s}{t}\snabla\phi \big)}{L^2(\Hcal_s)}
\ee
for the choice of weight $\Omega_s$, while for the choice $\Omega_r$ we set 
\bel{conj_conf_en2}
 \lp{s^2 \phi}{\Etwoa}  
:=  \lp{\big( 1+s^{a-1}\tau_- \big)
 \Big(
\frac{s}{t}\frac{\del^B_u(s^2\phi)}{s^{2}}, \snabla\phi \Big)}{L^2(\Hcal_s)} + \lp{s^{a-1}\tau_+ \big(\frac{\del^B_{r}
 (s^2\phi)}{s^2},\frac{s}{t}\snabla\phi \big)}{L^2(\Hcal_s)}. 
\ee 
Observe that  a computation using Hardy's inequality shows the inequalities 
%
\be
\aligned
& \lp{\phi(s)}{\Ewave}\lesssim \lp{r\phi(s)}{\Eonea},
\qquad \lp{\phi(s)}{\Ewave}\lesssim \lp{s^2\phi(s)}{\Etwoa},
\\
& \lp{\phi(s)}{\Ewave}\approx \lp{r\phi(s)}{\Etonezero},
\qquad \lp{\phi(s)}{\Ewave}\approx \lp{s^2 \phi(s)}{\Etwozero}. 
\endaligned
\ee


\subsection{Existence theory for the wave-Klein-Gordon model}

\paragraph{Main statement.}

We are now in a position to state our main result. 

\begin{theorem}[Global energy hierarchy for the wave-Klein-Gordon model]
\label{thm-main}
Consider the coupled nonlinear wave and Klein-Gordon system \eqref{eq main} for some given constants 
$P^{\alpha\beta}, R, H^{\alpha\beta}$, and $c>0$. Then for any integer $N \geq 8$, there exist positive constants $\eps= \eps(N) \in (0,1)$,  $\delta= \delta(N, \eps) \in (0,1)$, and $C_1>0$ such that the initial value problem
\eqref{eq main}- \eqref{initialdata} enjoys the following property. 
For all initial data satisfying, for some $a \in [0,1]$, the smallness and decay conditions 
\bel{eq:main-initial}
\aligned
& 
 \| u(s_0) \|_{E_{W}^N }
+
{
a\lp{ ru(s_0)}{\EoneaN}
+a \, \lp{ s^2 u(s_0)}{\EtwoaN}
}
+
 \| v(s_0) \|_{E_{KG}^N }\leq \eps,
\endaligned
\ee
%
%
the Cauchy problem  \eqref{eq main}- \eqref{initialdata}
admits a global-in-time solution $(u,v)$ defined in the future of
$\Hcal_{s_0}$ and satisfying the energy bounds for all $s \geq s_0$ and any $k = |J| = 0, \ldots, N$ and $|I|+|J|\leq N$
\bel{eq:main}
\aligned
&  
\|\del^{I} L^J u(s)
\|_{\Ewavea} \lesssim
\eps s^{\max(a-1/2,0) \delta {
+ {\max(k, 1) \delta}}},
\\
&  
\| \del^{I} L^J v(s) \|_{\EKG}
\lesssim \eps s^{\max(-a +1/2,0) + k\delta}. 
\endaligned
\ee
Furthermore, the following pointwise decay properties hold (for all $s \geq s_0$ and some constants $C_k$)
\bel{eq:main_pointwise}
\aligned
& \sup_{|I|\leq k}\|  \tau^{1/2}_+ \del^{I} u(s)
\|_{L^\infty(\Hcal_s)}
\lesssim \eps s^{k\delta-1/2}, 
\\
& \sup_{|I|\leq k}\|  \tau^{3/2}_+ \del^{I} v(s) \|_{L^\infty(\Hcal_s)}
\lesssim \eps s^{\max(-a+1/2,0)\delta + k\delta}.
\endaligned
\ee
\end{theorem}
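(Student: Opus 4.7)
The plan is to run a continuity argument on the weighted hyperboloidal energies appearing in \eqref{eq:main}. Using standard local-in-time well-posedness on hyperboloids together with a continuation criterion, it suffices to propagate the bounds \eqref{eq:main} from $\Hcal_{s_0}$ to an arbitrary later hyperboloid. I would make the bootstrap assumption that \eqref{eq:main} and \eqref{eq:main_pointwise} hold on an interval $[s_0, s_*)$ with the constant $C_1$ replaced by $2C_1$, and aim to recover both sets of bounds with $C_1$, deducing the pointwise estimates at each stage from the weighted Klainerman--Sobolev inequality on hyperboloids from \cite{PLF-YM-book}.

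Next I would commute the system \eqref{eq main} with each admissible vector field $\del^I L^J$ of total order $|I|+|J| \leq N$. Since the boosts $L_i$ commute with both $\Box$ and $-\Box + c^2$, the commutators only rearrange terms of the same order into semi-hyperboloidal frame components, using the decomposition identities from Section~\ref{section4}. The quasilinear Klein--Gordon term $u H^{\alpha\beta}\del_\alpha\del_\beta v$ is the most delicate to commute, but the standard identity $\del^I L^J (u H^{\alpha\beta}\del_\alpha\del_\beta v) = u H^{\alpha\beta}\del_\alpha\del_\beta(\del^I L^J v) + (\text{lower-order coupling})$ distributes the derivatives correctly and produces only strictly hierarchical lower-order terms that sit within the inductive scheme.

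The key step is then to apply the fractional Morawetz estimate of Section~\ref{section5} to each commuted wave equation for $\del^I L^J u$, used separately with the two conformal weights $\Omega_r = r$ and $\Omega_s = s^2$. This yields simultaneous control of $\lp{r \, \del^I L^J u}{\Eonea}$ and $\lp{s^2 \, \del^I L^J u}{\Etwoa}$, which via the comparison inequalities stated at the end of Section~\ref{section2} dominate the target $\lp{\del^I L^J u(s)}{\Ewavea}$. The source term on the right is $\del^I L^J (P^{\alpha\beta}\del_\alpha v \del_\beta v + Rv^2)$, measured in the spacetime norm $N_{W}^{a}$, and is controlled by splitting the derivatives between $L^\infty$ and $L^2$ and invoking the bootstrap pointwise decay of $v$. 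Concurrently I would apply the standard hyperboloidal energy estimate for the Klein--Gordon operator to $\del^I L^J v$, integrating the source $\del^I L^J(u H^{\alpha\beta}\del_\alpha\del_\beta v)$ in time; the decoupling exploits the hierarchy that $u$ carries the $L^\infty$ bound via Klainerman--Sobolev while $v$ and its two derivatives stay in $L^2$.

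The main obstacle I expect is closing the top-order wave estimate in the regime $a > 1/2$, where the enhanced weight $1 + a\tau_+^{a}\tau_0^{\max(a,1/2)}$ in \eqref{frac_energy} is genuinely stronger than the standard hyperboloidal weight. Here one must verify that the $N_W^{a,N}$ norm of the wave source is absorbed by the Morawetz estimate with no more than an $s^{\max(a-1/2,0)\delta + N\delta}$ loss, which I expect requires splitting each hyperboloidal integral into the interior piece $\Hcal_s \cap \Kint$ (where $\tau_+ \approx s$ and the conformal weight $\Omega_s = s^2$ gives the sharper bound) and the wave zone $\Hcal_s \cap \Kwave$ (where $\Omega_r = r$ is sharper), and then patching the two estimates across $r \approx t/2$. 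With this dichotomy in hand, the asymmetric right-hand sides in \eqref{eq:main} --- automatically bounded for $v$ when $a \geq 1/2$ and for $u$ when $a \leq 1/2$ --- together with the pointwise bounds \eqref{eq:main_pointwise} then follow by arguments parallel to those in \cite{PLF-YM-1,PLF-YM-book}.
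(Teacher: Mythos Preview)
Your overall architecture (bootstrap on \eqref{eq:main}, commute with $\del^I L^J$, feed the wave component through Theorem~\ref{theor-main_estimate}/Corollary~\ref{theor-main_cor} and the Klein--Gordon component through the standard hyperboloidal energy estimate, recover pointwise decay via Klainerman--Sobolev) matches the paper. Two points, however, diverge from what the paper actually does and one of them is a genuine gap.

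\textbf{The $\Kint/\Kwave$ split is not needed.} The paper never localizes to the interior versus the wave zone. Theorem~\ref{theor-main_estimate} already packages the two conformal weights $\Omega_r$ and $\Omega_s$ into a single estimate, and Corollary~\ref{theor-main_cor} converts this into control of $\Ewavea$ globally on each $\Hcal_s$. The source is then estimated directly in the $N_W^a$ norm, with no patching across $r\approx t/2$. Your anticipated obstacle here is imagined.

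\textbf{The Klein--Gordon source is where the real gain lives, and your description of it reproduces only the old bound.} You write that for $\del^I L^J(u\,H^{\alpha\beta}\del_\alpha\del_\beta v)$ one ``exploits the hierarchy that $u$ carries the $L^\infty$ bound via Klainerman--Sobolev while $v$ and its two derivatives stay in $L^2$.'' That is exactly the argument of \cite{PLF-YM-1}, and it yields only the growth $s^{1/2+k\delta}$ for $v$, not the improved $s^{\max(-a+1/2,0)+k\delta}$. The problematic case is $Z^{N_1}u\cdot \del^2 Z^{N_2}v$ with $N_1$ large (top order), where $Z^{N_1}u$ \emph{cannot} go into $L^\infty$. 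The paper's new mechanism is that the weighted energy $\Ewavea$ contains the zero-order piece $a\,\lp{s^a t^{-1}\phi}{L^2(\Hcal_s)}$, giving direct control of $\lp{(s^a/\tau_+)\,Z^{N_1}u}{L^2(\Hcal_s)}$. One then writes
\[
\lp{Z^{N_1}u\,\del^2 Z^{N_2}v}{L^2(\Hcal_s)}
\ \lesssim\ \lp{\tfrac{s^a}{\tau_+}Z^{N_1}u}{L^2(\Hcal_s)}\,\lp{\tfrac{\tau_+}{s^a}\del^2 Z^{N_2}v}{L^\infty(\Hcal_s)}
\ \lesssim\ \lp{\tfrac{s^a}{\tau_+}Z^{N_1}u}{L^2(\Hcal_s)}\,s^{-a-1/2},
\]
using the Klein--Gordon pointwise decay $|\del^2 Z^{N_2}v|\lesssim t^{-1}s^{-1/2}$. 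This is what buys the extra factor $s^{-a}$ and closes the Klein--Gordon bootstrap at the improved rate. Without this step your scheme does not improve on \cite{PLF-YM-1}. The wave-source estimates ($\del v\,\del v$ and $v^2$ in the $N_W^a$ norm) are handled analogously, placing one factor of $v$ in $L^\infty$ against the weight $\tau_+^a(s/t)$, and are straightforward once the above is in hand.
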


The conclusion of main interest is obtained by {\sl choosing $a=1/2$} in the above theorem, since in this case the exponent $-a+1/2$ cancels out and we arrive at the almost sharp growth $s^{k\delta}$ for {\sl both} the wave and the Klein-Gordon components.
Compare this (almost) energy boundedness statement with the previous result in \cite{PLF-YM-1} which had a
rate of growth of $s^{1/2+k \delta}$ for the energy of the Klein-Gordon component. 
Hence, under stronger decay conditions in comparison to \cite{PLF-YM-1} we arrive at a stronger conclusion as far as the decay of solutions is concerned.


\paragraph{Main energy and pointwise estimates.} 

In the course of our analysis, relying on the weighted spaces defined earlier in this section we will study solution to the wave equation
\bel{equa-Cauchy-wave-eq}  
\aligned
& \Box\phi = F, 
\qquad 
 (\phi, \del_t\phi)|_{\Hcal_{s_0}} = (\phi_0, \phi_1), 
\endaligned
\ee
where the data $(\phi_0,\phi_1)$ are prescribed on the initial hyperboloid $\Hcal_{s_0}$.

\begin{theorem}[Fractional Morawetz estimate in Bondi coordinates]
\label{theor-main_estimate}
Given any $a \in [0,1]$, any sufficiently regular and compactly supported solution $\phi$ to the Cauchy problem \eqref{equa-Cauchy-wave-eq} satisfies ($k \geq 0$ being an arbitrary integer) 
satisfies 
\be
\sup_{s_0 \leq s \leq s_1} 
\big(\lp{\phi(s)}{\Eoneak}+ \lp{\phi(s)}{\Etwoak} \big)
 \lesssim  \lp{\phi(s_0)}{\Eoneak}+ \lp{\phi(s_0)}{\Etwoak}
+ \lp{F}{\Nwaveak [s_0,s_1]}. \label{frac_conf_wave1}
\ee
\end{theorem}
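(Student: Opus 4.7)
The plan is to derive~\eqref{frac_conf_wave1} by applying the general conformal energy identity of Section~\ref{section3} separately to the two conformally rescaled unknowns
\[
\psi_r := r\phi,\qquad \psi_s := s^{2}\,\phi,
\]
corresponding to the two weight choices $\Omega_r$ and $\Omega_s$ in~\eqref{weights_definition}, and then to contract the resulting stress--energy tensors against two carefully designed fractional Bondi multipliers. The point of the two conformal rescalings is that, in the Bondi frame of Section~\ref{section4}, the equations satisfied by $\psi_r$ and $\psi_s$ take particularly simple triangular forms in which the derivatives $(\del^B_u,\del^B_r,\snabla)$ appear with coefficients tailored to reproduce, respectively, the weights present in $\Eonea$ and $\Etwoa$ as defined by~\eqref{conj_conf_en1}--\eqref{conj_conf_en2}.

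For the $\Eonea$ contribution I would use a multiplier of the schematic form
\[
X_r^a := \big(1+a\,\tau_+^a\,\tau_0^{\max(a,1/2)}\big)\,\del^B_u \;+\; \tau_+^a\,\del^B_r,
\]
and a parallel $X_s^a$ built with the weights $(1+s^{a-1}\tau_-)$ and $s^{a-1}\tau_+$ on the two Bondi null directions for the $\Etwoa$ piece. Integrating the corresponding divergence identity over the truncated slab $\Kcal_{[s_0,s]}$ produces three contributions: the hyperboloidal fluxes on $\Hcal_{s_0}$ and $\Hcal_s$, which after the Hardy-type inequalities recorded just before the statement reconstruct $\lp{\phi(s)}{\Eonea}+\lp{\phi(s)}{\Etwoa}$ and their initial-time counterparts; a lateral flux across $\{r=t-1\}$, which vanishes by finite speed of propagation for compactly supported data; and a bulk term whose coercivity is the heart of the argument. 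The source $F$ then enters as $\int X_r^a(\psi_r)\,\Omega_r F\,dV$ (and similarly for $X_s^a$), and Cauchy--Schwarz combined with the Jacobian factor $s/t$ of the hyperboloidal volume form bounds it by $\lp{F}{\Nwaveak[s_0,s_1]}$ times the square root of the energy, allowing a Gronwall closure in $s$.

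The main obstacle is the positivity of the bulk term in the fractional regime $0<a<1$. The deformation tensor of $X_r^a$ generates manifestly favorable $(\snabla\psi_r)^2$ and $a\,\tau_+^{a-1}(\del^B_u\psi_r)^2$ contributions, but also potentially harmful cross terms of the form $\tau_+^{a-1}(\del^B_u\psi_r)(\del^B_r\psi_r)$ and terms coming from derivatives of the weight $\tau_0^{\max(a,1/2)}$. The exponent $\max(a,1/2)$ is chosen precisely so that these cross terms can be absorbed into the two manifestly positive squares via the pointwise bound $\tau_0\leq 1$, leaving only an $s$-integrable remainder. In the interior region $\Kint$ the identifications $s\approx t\approx\tau_+$ allow one to trade the Bondi-frame weights for the hyperboloidal ones present in $\Ewave$, so the delicate analysis is concentrated in $\Kwave$; the computation for $X_s^a$ proceeds along the same lines, with the roles of $\tau_+$ and $\tau_-$ interchanged by the conformal inversion encoded in $\Omega_s = u\baru$.

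Finally, the high-order statement $k\geq 1$ follows by commuting the equation with the fields in $\Lbb$: since $[\del_\alpha,\Box]=[L_i,\Box]=0$, each $\del^I L^J \phi$ solves $\Box(\del^I L^J\phi)=\del^I L^J F$, so the $k=0$ estimate can be applied term by term and summed. The hardest step in practice will be establishing the simultaneous bulk positivity of the two multipliers in the full range $a\in[0,1]$, which is precisely the role of the Bondi tensorial identities developed in Section~\ref{section4} and of the fractional Morawetz computation carried out in Section~\ref{section5}.
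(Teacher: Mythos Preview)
Your overall architecture is correct and matches the paper: conformal rescaling by $\Omega_r=r$ and $\Omega_s=s^2$, contraction of the stress--energy tensor with Bondi multipliers, integration over $\Kcal_{[s_0,s]}$, and commutation with $\Lbb$ for $k\geq 1$. The treatment of the source term and the high-order step are essentially identical to what the paper does.

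However, your choice of multipliers differs from the paper's in two substantial ways, and this is exactly where the content of the proof lies.

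First, for the $\Eonea$ estimate the paper does \emph{not} use a single multiplier over the whole range $a\in[0,1]$. It splits into two regimes: for $a\in[1/2,1]$ the multiplier is $K^a=(1+u^{2a})\del^B_u+\tfrac12(\baru^{2a}-u^{2a})\del^B_r$, chosen so that $\Acal^{uu}=\Acal^{ur}=\Acal^{rr}=0$ identically and only a nonnegative angular bulk term survives (this nonnegativity is Lemma~\ref{pos_weight1}, a convexity argument for $q(\baru,u)_a$). For $a\in[0,1/2]$ a different multiplier $Y^a=(1+2a\tau_+^{2a}\tau_0)\del^B_u+r^{2a}\del^B_r$ is used, for which the bulk terms are handled by an AM--GM argument. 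Remark~\ref{distinct_multvf} explains that this split is forced: the sign of the key quantity $q(\baru,u)_a$ flips at $a=1/2$, so one multiplier cannot cover both ranges. Your single $X_r^a$ (with coefficient $\tau_+^a$ on $\del^B_r$, which incidentally should carry exponent $2a$ to produce the right quadratic flux) would run into exactly this obstruction.

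Second, for the $\Etwoa$ estimate the paper does \emph{not} build an $a$-dependent multiplier at all. It uses the fixed conformal Killing field $K=(1+u^2)\del^B_u+2(u+r)r\del^B_r$ together with the weight $\Lambda=s^{2a-2}$ in the modified stress tensor~\eqref{em_tens_cut}. Because $K$ is conformal Killing for $\td g=\Omega_s^{-2}\eta$, the entire tensor $\Acal$ vanishes identically, and the only bulk term is $\Ccal^\alpha\,{}^{(K)}\td P_\alpha$ with $\Ccal=\nabla\Lambda$; its sign follows immediately from the dominant energy condition since $-\nabla s$ and $K$ are both causal future-directed. Your proposed $X_s^a$ with $a$-dependent coefficients would generate a nontrivial $\Acal$ and you would again have to fight for bulk positivity, whereas the paper's device sidesteps this completely.

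In short: your plan to ``absorb cross terms via $\tau_0\leq 1$'' is precisely the step that the paper avoids by its specific multiplier/weight choices, and without those choices the bulk positivity you identify as the heart of the argument is not established.
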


As a consequence, we have the following energy bounds in terms of the semi-hyperboloidal frame. These, in turn, yield a pointwise decay estimate.

\begin{corollary}
\label{theor-main_cor} 
Given any $a \in [0,1]$, any sufficiently regular and compactly supported solution $\phi$ to the Cauchy problem \eqref{equa-Cauchy-wave-eq} satisfies the following properties 
($k \geq 0$ being an arbitrary integer). 
\bei

\item {\bf Weighted energy estimate:} 
\bel{frac_conf_wave2}
\sup_{s_0 \leq s \leq s_1} \lp{\phi(s)}{\Ewaveak}
\lesssim   
\lp{\phi(s_0)}{\Eoneak}+ \lp{\phi(s_0)}{\Etwoak}
+ \lp{\phi(s_0)}{\Ewavek}+ \lp{F}{\Nwaveak [s_0,s_1]}.
\ee

\item {\bf Weighted pointwise decay:} 
\bel{point1}
\aligned
& \sup_{s_0 \leq s\leq s_1}\sum_{|I|+|J|\leq k}
\lp{s^a \tau_+^{1/2}\del^{I}L^J  \phi}{L^\infty(\Hcal_s)}
  \lesssim 
\lp{\phi(s_0)}{\Eoneaktwo}+ \lp{\phi(s_0)}{\Etwoaktwo}
 + \lp{\phi(s_0)}{E_{W}^{k+2}}+
\lp{F}{\Nwaveaktwo [s_0,s_1]}.  
\endaligned
\ee
\eei
\end{corollary}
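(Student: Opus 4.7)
The plan is to reduce both parts of the corollary to Theorem~\ref{theor-main_estimate} by commuting vector fields through the wave operator and then converting the conformal $L^2$ bounds into $\Ewavea$-type bounds and pointwise bounds. Since every field in $\Lbb$ is a Killing vector field of Minkowski space, each translation $\del_\alpha$ and each Lorentz boost $L_i$ commutes with $\Box$, so $\Box(\del^I L^J\phi)=\del^I L^J F$ for any multi-indices with $|I|+|J|\leq k$. Applying Theorem~\ref{theor-main_estimate} to $\del^I L^J\phi$ with source $\del^I L^J F$, summing over $|I|+|J|\leq k$, and recognizing the resulting sums as the high-order norms $\Eoneak$, $\Etwoak$, and $\Nwaveak$ already yields
\[
\sup_{s_0\leq s\leq s_1}\bigl(\lp{\phi(s)}{\Eoneak}+\lp{\phi(s)}{\Etwoak}\bigr) \lesssim \lp{\phi(s_0)}{\Eoneak}+\lp{\phi(s_0)}{\Etwoak}+\lp{F}{\Nwaveak [s_0,s_1]}.
\]

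To derive the weighted energy bound \eqref{frac_conf_wave2}, I would next establish the pointwise-in-$s$ comparison
\[
\lp{\phi(s)}{\Ewaveak}\lesssim \lp{\phi(s)}{\Eoneak}+\lp{\phi(s)}{\Etwoak}+\lp{\phi(s)}{\Ewavek},
\]
which upgrades the already stated inequalities $\lp{\phi(s)}{\Ewave}\lesssim \lp{r\phi(s)}{\Eonea}$ and $\lp{\phi(s)}{\Ewave}\lesssim \lp{s^2\phi(s)}{\Etwoa}$ to the fractional $a$-weighted case and to higher orders. The argument expresses the semi-hyperboloidal derivatives $\delu_i$ in terms of the Bondi operators $\del_u^B$, $\del_r^B$, and the angular derivative $\snabla$, with the roles of the conformal weights $\Omega_r=r$ and $\Omega_s=s^2$ swapped between the wave domain $\Kwave$ (where $r\approx \tau_+$, so the $\tau_+^a$ factor in $\Eonea$ controls $s^a\delu_x\phi$) and the interior $\Kint$ (where $r\ll t$ and the $s^{a-1}\tau_+$ factor in $\Etwoa$ is used instead). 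The zeroth-order piece $a(s^a/t)\phi$ of $\Ewavea$ is absorbed by a Hardy inequality on the hyperboloid. Combining this pointwise comparison with the display above, and invoking the $a=0$ case of Theorem~\ref{theor-main_estimate} (together with the stated equivalence $\Ewave\approx \Etonezero\approx \Etwozero$ and with the fact that $\Nwaveak$ dominates its $a=0$ counterpart since $a\geq 0$) to handle the $\Ewavek$ term, delivers \eqref{frac_conf_wave2}.

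For the pointwise bound \eqref{point1}, I would apply the weighted energy estimate \eqref{frac_conf_wave2} at level $k+2$ together with a Klainerman--Sobolev-type inequality on the hyperboloid,
\[
\sup_{\Hcal_s}s^a \tau_+^{1/2}|w| \lesssim \sum_{|K|\leq 2}\lp{L^K w}{\Ewavea},
\]
applied to $w=\del^I L^J \phi$ for $|I|+|J|\leq k$. Commuting the two additional $L^K$ factors past $\del^I L^J$ produces only translation terms via $[L_i,\del_\alpha]$, so the right-hand side is controlled by $\lp{\phi(s)}{E_{\text{W}}^{a,k+2}}$, and the weighted energy estimate at level $k+2$ converts this into the initial-data and source-term norms appearing in \eqref{point1}.

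The main obstacle I foresee is the precise derivation of the weighted Sobolev inequality with the combined $s^a \tau_+^{1/2}$ prefactor, since this weight mixes the hyperboloidal time $s$ with the Euclidean distance $r$. In $\Kint$ one has $s\approx t\approx \tau_+$ and the standard hyperboloidal Sobolev inequality of LeFloch--Ma applies directly, whereas in $\Kwave$ one has $s^a\tau_+^{1/2}\approx \tau_-^{a/2}\tau_+^{a/2+1/2}$ and one must combine angular Sobolev on the Euclidean sphere with the $\snabla$ and $\del_r^B$ control supplied by $\Ewavea$. Once this pointwise inequality, as well as the pointwise-in-$s$ comparison between $\Ewavea$ and $\Eonea+\Etwoa+\Ewave$, are in place, both conclusions of the corollary follow mechanically from Theorem~\ref{theor-main_estimate}.
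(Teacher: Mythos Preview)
Your overall structure is correct and matches the paper: commute $\del^I L^J$ through $\Box$, invoke Theorem~\ref{theor-main_estimate} at each order, convert the conformal energies $\Eonea,\Etwoa$ into $\Ewavea$ via a pointwise-in-$s$ comparison lemma, and then obtain \eqref{point1} from a weighted Sobolev inequality applied at level $k+2$. The paper does exactly this.

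Where you diverge is in the tactics for the two auxiliary results, and in both cases you are making life harder than necessary. For the comparison $\lp{\phi}{\Ewavea}\lesssim \lp{r\phi}{\Eonea}+\lp{s^2\phi}{\Etwoa}+\lp{\phi}{\Ewave}$, the paper does \emph{not} split into $\Kint$ and $\Kwave$. Instead it writes the identities $\tfrac{1}{r}\del_t(r\phi)=\del_t\phi$ and $\tfrac{1}{s^2}\del_t(s^2\phi)=\del_t\phi+\tfrac{2t}{u\baru}\phi$, subtracts them with the weight $s^{a-1}(s/t)u$, and reads off a global bound for $(s^a/t)\phi$; the tangential piece $s^a\delu_r\phi$ is handled similarly from $\tfrac{1}{s^2}\del_r^B(s^2\phi)$. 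This algebraic subtraction explains why \emph{both} conformal norms must appear on the right-hand side simultaneously, rather than one per region. For the Sobolev step, the paper's inequality \eqref{KlSob} is obtained in one line by applying the standard hyperboloidal Sobolev estimate to the function $(s^a/t)\phi$ and using that boosts commute with $s^a/t$ up to harmless coefficients; no separate treatment of $\Kwave$ is needed, and the right-hand side involves only $\lp{(s^a/t)L^J\phi}{L^2(\Hcal_s)}$, which is directly a piece of $\lp{\phi}{E_W^{a,2}}$ (for $a>0$; at $a=0$ one closes with Hardy). Your proposed region-splitting arguments would likely also work, but they introduce case distinctions that the paper avoids entirely.
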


For $a=1/2$ we obtain the relevant estimate for our purpose in the present paper and this is the key in proving the 
(essential) 
and the weighted energy boundedness stated in Theorem~\ref{thm-main} follows by following the same lines as in \cite{PLF-YM-book}. 
Hence, our main task for the rest of this paper is giving a proof of Theorem~\ref{theor-main_estimate}. 
Note in passing that plugging in the extremal value $a=1$ above allows us to recover
(based on the standard vector field arguments) the $L^\infty$ decay for the linear wave equation in Minkowski space, namely 
$t^{-3/2}$ in timelike regions and $t^{-1}$ in the vicinity of the light cone $t=r$.
  
  
\section{A conformal multiplier identity for the hyperboloidal foliation} 
\label{section3}

\subsection{Energy formalism for the wave equation}

\paragraph{Divergence identity.}

To any solution $\phi$ to the equation \eqref{equa-Cauchy-wave-eq}, we associate the energy-momentum tensor associated with the Minkowski metric $\eta$ and an arbitrary function $\phi$, namely 
\be\label{em_tensor1}
Q_{\alpha\beta}[\phi] = \del_{\alpha}\phi\del_{\beta}\phi  - 
\frac{1}{2}\eta_{\alpha\beta}
\del^\gamma 
\phi  \del_{\gamma}\phi.
\ee
It is well-known that this symmetric two-tensor is related to
the wave operator by the identities 
\bel{divergence1}
\del^\gamma  \big( Q_{\gamma\beta}[\phi]  \big) 
=  F \, \del_{\beta}\phi. 
\ee
To each vector field $X=X^{\gamma} \del_{\gamma}$, we can then associate the one-form field
\be
{}^{(X)}\!P_{\alpha}[\phi]  :=  Q_{\alpha\gamma}[\phi] \, X^{\gamma}.
\label{def_P}
\ee
which, sometimes, is simply denoted by $P_{\alpha}$ when the choice made for $X$ is obvious. 

We also define the symmetric two-tensor ${}^{(X)}\pi = \mathcal{L}_{X}\eta$ as being the Lie derivative of the metric $\eta$ with respect to $X$. It is called  the \textit{deformation tensor} of $X$ and, in local coordinates, one has
\be
{}^{(X)}\pi_{\alpha\beta}
:= 
(\mathcal{L}_{X}\eta)_{\alpha\beta} 
 =  \del_{\alpha}X_{\beta}+ \del_{\beta}X_{\alpha}.
\ee
When the choice of the vector field is obvious from context we simply write
$\pi$ instead of ${}^{(X)}\pi $. Taking the divergence in \eqref{def_P} and using the identities \eqref{divergence1}, we find 
\be
\del^\gamma  \big( P_{\gamma}[\phi] \big)
 = 
F  X\phi +  \frac{1}{2}{}^{(X)} \pi^{\alpha\beta}
Q_{\alpha\beta}[\phi]. \label{stokes_1}
\ee

\paragraph{Integrating between two hyperboloids.}

For any $s_1\geq s_0 >1$, integrating \eqref{stokes_1} over the spacetime domain
$\Kcal_{[s_0,s_1]}$ and using
Stokes' theorem we get the \textit{multiplier identity} associated
with the vector field $X$
\be
\int_{s=s_0} {}^{(X)} P_{\alpha}[\phi]  \, N^\alpha  
\Vol    - \int_{s=s_1} \!{}^{(X)}\!P_{\alpha}[\phi]  \, N^\alpha  
\Vol  =  \int_{s_0}^{s_1}\!\!\!\int_{s=s'}
\Big( F  X\phi  +  \frac{1}{2}{}^{(X)}
\pi^{\alpha\beta}Q_{\alpha\beta} [\phi]  \big)  \Vol ds',
\label{divergence2}
\ee
where  
 $\Vol $ denotes the volume element on hyperboloids: $\Vol = \frac{s}{t}dx$.
Here, $N$ denotes the future-directed unit normal vector to the level sets $s=const$
\bel{normalv}
N^\alpha = \frac{-m^{\alpha\beta} \del_{\beta}s}{(-m^{\alpha\beta}
\del_{\alpha}s\del_{\beta}s)^{1/2}}   
= -m^{\alpha 0}\frac{t}{s} + m^{\alpha r}\frac{r}{s}.   
\ee
We also define the {\it rescaled normal vector} $N'$ via
\be
N{'}^{ \alpha} 
= \frac{s}{t}N^\alpha =-m^{\alpha 0} + m^{\alpha r}{r \over t}. \label{resc_normalv}
\ee
In terms of $N{'}$ we have the identity
\be\label{rescaleddx}
{}^{(X)}P_{\alpha}N^\alpha  \,
\Vol ={}^{(X)}P_{\alpha}N{'}^{ \alpha}   \,
dx. 
\ee
Last, we refer to the integrand on the left hand side 
of the equation \eqref{divergence2} as the \textit{energy density} associated with $X$
with respect to the foliation by spacelike hypersurfaces $s=const$. 


\subsection{Multipliers and commutators}
\label{form_comm}

The following setup will be useful for
constructing weighted energies.
Given a vector field $X$, we define the \textit{traceless deformation tensor} of $X$ to be
$
{}^{(X)} \widehat{\pi} := 
{}^{(X)} \pi - \frac{1}{2}\eta   (\text{Tr }{}^{(X)}\pi).
$
 
\begin{lemma}[Basic formulas involving ${}^{(X)} \widehat{\pi}$]\label{basic_iden_lemma}
Let $\phi$ be a solution to the equation \eqref{equa-Cauchy-wave-eq} and $X$ be a smooth vector field. The 
following identities hold:
\be\label{pi_hat_formula}
{}^{(X)} \widehat{\pi}^{\alpha\beta}  =  -X(\eta^{\alpha\beta})
- \eta^{\alpha\beta }\del_\gamma X^\gamma
+ \eta^{\alpha\gamma}\del_\gamma X^\beta
+ \eta^{\beta\gamma}\del_\gamma X^\alpha, 
\ee
\be\label{divergence_formula}
\del^\gamma  P_{\gamma}  = 
F  X\phi +  \frac{1}{2}\widehat{\pi}^{\alpha\beta}
\del_\alpha\phi \del_\beta\phi 
= 
F  X\phi +  \frac{1}{2}\pi^{\alpha\beta}
Q_{\alpha\beta}, 
\ee
where \eqref{pi_hat_formula} is expressed in local coordinates.
\end{lemma}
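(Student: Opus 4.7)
Both identities are essentially algebraic, so my plan is to verify them by direct computation in the inertial Minkowski coordinates fixed throughout.

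For \eqref{pi_hat_formula} I would start from the expression ${}^{(X)}\pi_{\alpha\beta} = \del_\alpha X_\beta + \del_\beta X_\alpha$ already recorded above the lemma, raise both indices with $\eta^{-1}$, and use that $\eta$ has constant components in the $(t,x^i)$ frame to slide $\eta^{\beta\delta}$ inside the derivative: $\eta^{\alpha\gamma}\eta^{\beta\delta}\del_\gamma X_\delta = \eta^{\alpha\gamma}\del_\gamma X^\beta$. This yields
\[
{}^{(X)}\pi^{\alpha\beta} = \eta^{\alpha\gamma}\del_\gamma X^\beta + \eta^{\beta\gamma}\del_\gamma X^\alpha.
\]
The trace is $\text{Tr}({}^{(X)}\pi) = \eta^{\alpha\beta}\pi_{\alpha\beta} = 2\del_\gamma X^\gamma$, hence $\tfrac{1}{2}\eta^{\alpha\beta}\text{Tr}(\pi) = \eta^{\alpha\beta}\del_\gamma X^\gamma$; subtracting this from ${}^{(X)}\pi^{\alpha\beta}$ produces exactly the right-hand side of \eqref{pi_hat_formula}. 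The term $-X(\eta^{\alpha\beta})$ in the statement vanishes in Cartesian Minkowski coordinates but is displayed so that the same expression remains meaningful in the Bondi-type frame introduced in Section~\ref{section4}, where the metric components do vary.

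For \eqref{divergence_formula}, the rightmost equality is exactly \eqref{stokes_1}, so only the first equality requires justification. I would substitute the explicit energy-momentum tensor $Q_{\alpha\beta} = \del_\alpha\phi\,\del_\beta\phi - \tfrac{1}{2}\eta_{\alpha\beta}\del^\gamma\phi\,\del_\gamma\phi$ into $\tfrac{1}{2}\pi^{\alpha\beta}Q_{\alpha\beta}$ to obtain
\[
\tfrac{1}{2}\pi^{\alpha\beta} Q_{\alpha\beta} = \tfrac{1}{2}\pi^{\alpha\beta}\del_\alpha\phi\,\del_\beta\phi - \tfrac{1}{4}\text{Tr}(\pi)\,\del^\gamma\phi\,\del_\gamma\phi = \tfrac{1}{2}\widehat{\pi}^{\alpha\beta}\del_\alpha\phi\,\del_\beta\phi,
\]
where the last step just invokes the definition $\widehat{\pi}^{\alpha\beta} = \pi^{\alpha\beta} - \tfrac{1}{2}\eta^{\alpha\beta}\text{Tr}(\pi)$. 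Combined with \eqref{stokes_1} this yields the first equality of \eqref{divergence_formula}.

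There is no serious obstacle: the content of the lemma is the observation that the pure-trace part of $\pi^{\alpha\beta}$ contracts with the pure-trace part of $Q_{\alpha\beta}$ so as to produce a clean cancellation, leaving only the traceless piece $\widehat{\pi}^{\alpha\beta}$ effectively coupled to $\del\phi\otimes\del\phi$. This is precisely the algebraic cancellation that will later allow, in Section~\ref{section5}, a conformal multiplier to be traded against a tracelessness condition on the modified multiplier.
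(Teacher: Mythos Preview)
Your proposal is correct and follows essentially the same approach as the paper. The only minor difference is that the paper writes the contravariant deformation tensor directly in general local coordinates as ${}^{(X)}\pi^{\alpha\beta} = -X(\eta^{\alpha\beta}) + \eta^{\alpha\gamma}\del_\gamma X^\beta + \eta^{\beta\gamma}\del_\gamma X^\alpha$ (i.e.\ as $-(\mathcal{L}_X\eta^{-1})^{\alpha\beta}$), so the term $-X(\eta^{\alpha\beta})$ appears from the outset rather than being appended afterwards as a remark; your derivation in Cartesian coordinates followed by the Bondi-frame comment is equivalent, and your treatment of \eqref{divergence_formula} is exactly the computation the paper alludes to with ``follows directly from the definition of $Q_{\alpha\beta}[\phi]$.''
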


\begin{proof} To derive the identity \eqref{pi_hat_formula}, we write 
$
{}^{(X)}\pi^{\alpha\beta} = -X(\eta^{\alpha\beta})
+ \eta^{\alpha\gamma}\del_\gamma X^\beta + 
\eta^{\beta\gamma}\del_\gamma X^\alpha 
$
in local coordinates.
Subtracting the expression $\frac{1}{2} \eta^{\alpha\beta}  (\text{Tr }\pi)
=  \eta^{\alpha\beta}\del_{\gamma}X^{\gamma}$
from both sides completes the proof.
The identity \eqref{divergence_formula} follows directly from
the definition of $Q_{\alpha\beta}[\phi]$.
\end{proof}


\subsection{Conformal transformations for the wave equation}
\label{conf_chan_mult}

Let $g = (g_{\alpha\beta})$ be a Lorentzian metric on an $(3+1)$-dimensional spacetime. Given any positive weight  $\Omega: \RR^{3+1} \to \RR_+$ (assumed to be smooth throughout this paper), we consider the conformal metric $\td{g}_{\alpha\beta}$ defined by 
$
\td{g} := \Omega^{-2}g,
$
and we denote by $\td \nabla$ and $\Box_{\td{g}} = \td \nabla^\alpha\td \nabla_\alpha$ the corresponding covariant derivative and wave operator, respectively. A standard calculation (see \cite{Oliver}) shows that the function 
$
\Phi : =  \Omega\phi
$
 satisfies the Klein-Gordon equation
\bel{trans_eq}
\Box_{\td{g}}\Phi + V\Phi = \Omega^3  F,
\qquad 
V := \Omega^3 \Box_g\Omega^{-1}, 
\qquad F := \Box_{g}\phi. 
\ee 
Furthermore, given any smooth and positive weight $\Lambda:   \RR^{3+1} \to \RR_+$ and in view of the equation \eqref{trans_eq}, it is natural to introduce the \textit{conformal energy-momentum tensor} associated with the function $\Phi$  defined as 
\bel{em_tens_cut} 
\td{Q}_{\alpha\beta}[\Phi, \Lambda] = \Big(\del_\alpha\Phi\del_\beta\Phi  
- 
\frac{1}{2}\td{g}_{\alpha\beta}(\td{g}^{\mu\nu}\del_\mu
\Phi\del_\nu\Phi -  V\Phi^2)\Big)  \Lambda.
\ee
This tensor satisfies the divergence identity
\be
\td{\nabla}^\alpha \td{Q}_{\alpha\beta}[\Phi, \Lambda] = \Lambda \Omega^3  F
\del_\beta\Phi
 +   \frac{\Lambda}{2}(\del_\beta V)
\Phi^2+  \frac{\Omega^{2}}{\Lambda} (\nabla^{\alpha}\Lambda)\td{Q}_{\alpha\beta}.
 \label{div_law_conf}
\ee
In fact, our weights may vanish at the center $r=0$, and in such a case the above identities
are only valid for $r>0$. 
From the identity \eqref{div_law_conf} we have
$$
\aligned
&\td{\nabla}^\alpha X^{\beta}(\td{Q}_{\alpha\beta}[\Phi, \Lambda])
-(\td{\nabla}^\alpha X^{\beta}) \td{Q}_{\alpha\beta}[\Phi, \Lambda]
\\ & = \Lambda \Omega^3  F
X^{\beta}\del_\beta\Phi
 +   \frac{\Lambda}{2}(X^{\beta}\del_\beta V)
\Phi^2
+  \frac{\Omega^{2}}{\Lambda} (\nabla^{\alpha}\Lambda)X^{\beta}\td{Q}_{\alpha\beta}, \label{div_law_conf}
\endaligned
$$
and we have arrived at the following result.

\begin{proposition}
Let $X$ be a vector field associated with the energy flux ${}^{(X)} \td{P}_\alpha  = \td{Q}_{\alpha\beta}
X^\beta$. Then the one-form field ${}^{(X)} \td{P}_\alpha$ satisfies the following
\textit{conformal multiplier identity} associated with the vector field $X$ and the weight $\Omega$
\bel{div_identity1_cut} 
\int_{ s=s_0}\!\! {}^{(X)} \td{P}_\alpha N^\alpha  \Omega^{-2} \ \Vol   - \int_{ s=s_1} \!\!
{}^{(X)} \td{P}_\alpha N^\alpha  \Omega^{-2} \, \Vol  
=   \int_{s_0}^{s_1}\!\!\!\int_{s=s'} 
 \big(\td{\nabla}^\alpha {}^{(X)} \td{P}_\alpha \big)\Omega^{-4} \,  \Vol ds', 
\ee
in which the normal vector field $N$ is given by
\eqref{normalv} and the divergence term on the right-hand side reads
\bel{td_div_iden_cut3} 
\big(\td \nabla^\alpha {}^{(X)}\!\td{P}_\alpha\big) \Omega^{-4} 
= \Lambda F   \Omega^{-1}X\Phi + \Lambda \Omega^{-2} \Acal^{\alpha\beta}
\del_{\alpha}\Phi\del_{\beta}\Phi + \Lambda \Bcal\td \phi^2
+ \Omega^{-2} \Lambda^{-1}\Ccal^{\alpha}{}^{(X)}
\td P_{\alpha}  \ ,
\ee
with $\Acal, \Bcal, \Ccal$ given by 
\bel{AB_formulas1}
\Acal^{\alpha\beta} : =  \frac{1}{2}  \big({}^{(X)} \widehat{\pi} + 2X\ln (\Omega) g^{-1} \big), 
\qquad 
\Bcal :=  \frac{1}{2\Omega^{4}} \big(  X(V)- (\text{Tr }\Acal) V\big),  
\qquad 
\Ccal^{\alpha} := \nabla^{\alpha}\Lambda.
\ee
\end{proposition}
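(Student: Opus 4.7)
The plan is to apply the divergence theorem in the conformal metric $\td g = \Omega^{-2} g$ to the one-form ${}^{(X)}\td P_\alpha = \td Q_{\alpha\beta}X^\beta$ over the slab $\Kcal_{[s_0,s_1]}$ and then to unfold the resulting bulk integrand algebraically. In dimension four the conformal change yields the rescalings $d\mathrm{vol}_{\td g} = \Omega^{-4}\,d\mathrm{vol}_g$, $d\sigma_{\td g} = \Omega^{-3}\,d\sigma_g$, and $\td N^\alpha = \Omega\,N^\alpha$, so that $\td N^\alpha\,d\sigma_{\td g} = \Omega^{-2} N^\alpha\,\Vol$ on the hyperboloids $\Hcal_{s_0}$ and $\Hcal_{s_1}$, while $d\mathrm{vol}_{\td g} = \Omega^{-4}\,\Vol\,ds$ in the bulk. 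These are precisely the weights appearing in \eqref{div_identity1_cut}; lateral contributions on $r=t-1$ are discarded under the standing compact-support convention, exactly as in \eqref{divergence2}.

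For the bulk integrand, I would apply Leibniz and split $\td\nabla^\alpha{}^{(X)}\td P_\alpha = X^\beta\,\td\nabla^\alpha\td Q_{\alpha\beta} + \td Q_{\alpha\beta}\,\td\nabla^\alpha X^\beta$, treating the two pieces separately. Contracting the identity \eqref{div_law_conf} already derived for $\td Q_{\alpha\beta}[\Phi,\Lambda]$ with $X^\beta$ immediately delivers $\Lambda\Omega^3 F\,X\Phi$, $\tfrac{\Lambda}{2}(XV)\Phi^2$, and $\tfrac{\Omega^2}{\Lambda}(\nabla^\alpha\Lambda)\,{}^{(X)}\td P_\alpha$. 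After factoring in the overall $\Omega^{-4}$ from the bulk volume, these become respectively the source $\Lambda F\Omega^{-1}X\Phi$, a partial contribution to $\Lambda\Bcal\Phi^2$, and the full weight term $\Omega^{-2}\Lambda^{-1}\Ccal^\alpha{}^{(X)}\td P_\alpha$ of \eqref{td_div_iden_cut3}.

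The remaining symmetric piece $\td Q_{\alpha\beta}\,\td\nabla^\alpha X^\beta$ is where the main algebra lies. By symmetry of $\td Q$, only $\tfrac{1}{2}({}^{(X)}\td\pi)^{\alpha\beta} = \tfrac{1}{2}\td g^{\alpha\mu}\td g^{\beta\nu}(\mathcal{L}_X\td g)_{\mu\nu}$ contributes, and the conformal relation $\mathcal{L}_X\td g = \Omega^{-2}\mathcal{L}_X g - 2\Omega^{-3}(X\Omega)\,g$ combined with $\td g^{\alpha\beta} = \Omega^2 g^{\alpha\beta}$ reexpresses it entirely in terms of ${}^{(X)}\pi$ and $X\ln\Omega$. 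Splitting $\td Q$ into its $\Lambda\,\del_\alpha\Phi\del_\beta\Phi$ part and its $\td g_{\alpha\beta}$ trace part, the symmetric contribution regroups as $\tfrac{\Lambda}{2}\bigl[({}^{(X)}\td\pi)^{\alpha\beta} - \tfrac{1}{2}\td g^{\alpha\beta}\,\text{Tr}_{\td g}({}^{(X)}\td\pi)\bigr]\del_\alpha\Phi\del_\beta\Phi + \tfrac{\Lambda}{4}V\Phi^2\,\text{Tr}_{\td g}({}^{(X)}\td\pi)$. Using the computation $\text{Tr}_{\td g}({}^{(X)}\td\pi) = \text{Tr}\,{}^{(X)}\pi - 8X\ln\Omega$ and the four-dimensional identity $\text{Tr}\,\widehat\pi = -\text{Tr}\,\pi$ from Lemma~\ref{basic_iden_lemma}, the bracket collapses to $2\Omega^2\Acal^{\alpha\beta}\del_\alpha\Phi\del_\beta\Phi$ with $\Acal$ as in \eqref{AB_formulas1}, while the $V\Phi^2$ trace piece combines with the $\tfrac{\Lambda}{2}(XV)\Phi^2$ obtained previously to produce $\Lambda\Omega^4\Bcal\Phi^2$, using $\text{Tr}\,\Acal = 4X\ln\Omega - \tfrac{1}{2}\text{Tr}\,\pi$. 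Multiplying by $\Omega^{-4}$ then yields the missing $\Lambda\Omega^{-2}\Acal^{\alpha\beta}\del_\alpha\Phi\del_\beta\Phi$ and $\Lambda\Bcal\Phi^2$ terms of \eqref{td_div_iden_cut3}. The main obstacle is precisely this careful bookkeeping of the $\Omega$-powers introduced by $\mathcal{L}_X\td g$, by the index raisings with $\td g^{\alpha\beta}$, and by the scalar trace $\td g^{\alpha\beta}\td g_{\alpha\beta}=4$, so that $\Acal$, $\Bcal$, and $\Ccal$ end up expressed purely in terms of the Minkowski background as stated.
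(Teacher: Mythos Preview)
Your approach is correct and matches the paper's: the paper simply contracts the already-derived divergence law \eqref{div_law_conf} with $X^\beta$, applies Leibniz to isolate $\td\nabla^\alpha{}^{(X)}\td P_\alpha$, and leaves the identification of the deformation-tensor contraction with $\Acal$, $\Bcal$ as implicit. You carry out this identification explicitly and correctly, including the conformal rescaling of ${}^{(X)}\td\pi$ and the trace bookkeeping that produces $\text{Tr}_{\td g}({}^{(X)}\td\pi)=\text{Tr}\,{}^{(X)}\pi-8X\ln\Omega$ and $\text{Tr}\,\Acal=4X\ln\Omega-\tfrac12\text{Tr}\,\pi$, so your write-up is in fact more detailed than the paper's own sketch.
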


%

\section{Tensor identities in Bondi coordinates} 
\label{section4}

\subsection{Basic definitions}

As shown in Oliver \cite{Oliver} and Oliver and Sterbenz \cite{OliverSterbenz}, the Bondi coordinate system provides one with a very convenient way to establish energy estimates for the wave equation. In establishing our multiplier estimates, 
while we work here with a different, hyperboloidal, foliation, we can still follow the outline of the arguments in \cite{Oliver,OliverSterbenz}. Recall that $u=t- r$ is the Minkowski optical function whose level sets are outgoing null hypersurfaces and that $x^i $, $1\leq i \leq 3$ are the standard Cartesian coordinates. The set of coordinates $(u,x^i)$ is referred to as the {\textit{Bondi coordinate system}} for Minkowski space. We also define the \textit{polar Bondi coordinates} by $(u,r,x^A)$, where locally we choose $x^A$ to be two members of $\widehat x^i=r^{-1}x^i$. We also let $|\snabla^B \phi|^2$ denote the angular gradient of 
$\phi$ with respect to the spheres $u=const$ and $r=const$.

We denote by $\eta$ the Minkowski metric
in Bondi coordinates. In (polar) Bondi coordinates, the components
of $\eta$ have the form
\be
\eta_{uu} =  -1, \qquad
\eta_{ur}  =  -1, \qquad
\eta_{rr} = 0, \qquad
\eta_{bc}  =  r^{2}\delta_{bc},
\label{metric_bondi1}
\ee
with $\delta_{bc}$ denoting the standard metric on 2-spheres.
We denote by $\eta^{-1}$ the inverse Minkowski metric in polar Bondi coordinates, with 
$\eta^{uu} =  0$, $\eta^{ui}  =  - \omega^i, $
and
$\eta^{ij}  =  \delta^{ij}.$
In particular $\det(\eta^{-1})=-1$. Thus, the wave operator $\Box_\eta$ in Bondi coordinates takes the form
\be
\Box_\eta =   -2\del^B_u \del^B_r 
+ (\del^B_{r})^2 - 2r^{-1}\del^B_u + 2r^{-1} \del^B_{r} 
+ r^{-2}\sum_{i<j}(\Omega_{ij})^2.
\label{wave_bondi}
\ee
Henceforth we let $\del=(\del_t,\del_x)$ to denote the $(t,x)$ coordinate derivatives,
and likewise $\del^B=({\del_u^B},\del_x^B)$ denote the $(u,x)$-coordinate derivatives. By the chain rule, we have immediately 
\begin{align}
\del^B_u  =  \del_t, \qquad \del^B_{i}  =  
\del_{i}+ \omega^{i}\del_t, \qquad \del^B_{r}  =  
\del_r+ \del_t   \label{coords_chain_rule}.
\end{align}


\subsection{Various identities and estimates in Bondi coordinates}

Next we compute the key quantities from the equations \eqref{td_div_iden_cut3}, \eqref{AB_formulas1}. The proof of the following two lemmas is postponed to Appendix~\ref{Append-A}. 

\begin{lemma}[Identity for deformation tensor]
\label{moved-to-Append-A1}
Let $\Omega \geq 0$ be a weight and $X=X^{\alpha}\del_{\alpha}$
be a vector field in Bondi coordinates. The contravariant tensor $\Acal$
in \eqref{AB_formulas1} satisfies (at points where $\Omega>0$)
\begin{align}
\Acal(X,\Omega, \eta) & = \frac{1}{2} \big({}^{(X)} \widehat{\pi}
+ 2 X\ln (\Omega) \eta^{-1} \big) 
 \label{bondi_deften_A1}    \\
& = - \frac{1}{2}\big( \mathcal{L}_X \eta^{-1} +
\Big(\del^B_u X^u + \del^B_{r} X^r + \del^B_i\overline X^i 
+2(\frac{X^{r}}{r}-X \ln \Omega) \Big)\eta^{-1}
\big), \notag
\end{align}
where $\overline X^i=X^i- \omega^i \omega_j X^j$ denotes the angular part
and $X^r= \omega_i X^i$ the radial part of $X$. Furthermore,
if $X$ has no angular part, that is $X=X^{u}(u,r)\del^B_{u}+X^{r}(u,r)\del^B_{r}$, then
the components of the tensor $\Acal$ in polar Bondi coordinates are
\begin{subequations}\label{A_coeff}
\begin{align}
\Acal^{uu}  & = 
-   \del_r^B X^u , 
\, \label{A_coeff1}\\
\Acal^{rr}  & =  
\frac{1}{2}\del_r^B X^r - \frac{1}{2}{\del_u^B} X^u-  {\del_u^B} X^r
- \Big(\frac{X^r}{r}-X \ln \Omega \Big), 
\label{A_coeff2}\\
\Acal^{ur}  & =   \frac{1}{2}\del_r^B X^u+ \Big(\frac{X^r}{r}-X \ln \Omega \Big),
\label{A_coeff3}\\
\Acal^{bc}  & = \Big(\frac{X^{r}}{r}
- \frac{1}{2}{\del_u^B} X^u- \frac{1}{2}\del_r^B X^r
- \Big(\frac{X^r}{r}-X \ln \Omega\Big) \Big)\frac{\delta^{bc}}{r^{2}}
.\label{A_coeff4}
\end{align}
\end{subequations}
\end{lemma}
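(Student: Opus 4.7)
The plan is to work directly from the algebraic identity \eqref{pi_hat_formula} for ${}^{(X)}\widehat{\pi}$, reading its three mixed terms as the negative of a Lie derivative of $\eta^{-1}$. Since $(\mathcal{L}_X\eta^{-1})^{\alpha\beta} = X(\eta^{\alpha\beta}) - \eta^{\gamma\beta}\del_\gamma X^\alpha - \eta^{\alpha\gamma}\del_\gamma X^\beta$, the formula \eqref{pi_hat_formula} collapses to ${}^{(X)}\widehat{\pi} = -\mathcal{L}_X\eta^{-1} - \eta^{-1}\del_\gamma X^\gamma$, and adding the conformal correction $2X\ln\Omega\,\eta^{-1}$ yields
\[
2\Acal = -\mathcal{L}_X\eta^{-1} - \eta^{-1}\big(\del_\gamma X^\gamma - 2X\ln\Omega\big).
\]
This is coordinate-invariant; only the divergence $\del_\gamma X^\gamma$ must now be put in the polar-Bondi form displayed in \eqref{bondi_deften_A1}.

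To convert the Cartesian Bondi divergence $\del_u^B X^u + \del_i^B X^i$ into radial and angular pieces, I would use the decomposition $X^i = \overline X^i + \omega^i X^r$ together with the two elementary identities $\omega^i \del_i^B = \del_r^B$ and $\del_i^B \omega^i = 2/r$ (the latter is the Euclidean divergence of the unit radial field in $\RR^3$, using $\del_t \omega^i = 0$). A one-line computation then gives
\[
\del_\gamma X^\gamma = \del_u^B X^u + \del_r^B X^r + \del_i^B \overline X^i + 2 X^r/r,
\]
which, substituted into the previous display, produces the second equality in \eqref{bondi_deften_A1}.

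For the explicit component formulas \eqref{A_coeff1}--\eqref{A_coeff4}, I specialize to $X = X^u(u,r)\del_u^B + X^r(u,r)\del_r^B$, so that $\overline X \equiv 0$ and angular derivatives of the components vanish. Using the polar Bondi inverse metric $\eta^{uu}=0$, $\eta^{ur}=-1$, $\eta^{rr}=1$, $\eta^{bc}=r^{-2}\delta^{bc}$, I would compute $(\mathcal{L}_X\eta^{-1})^{\alpha\beta}$ component by component: the $(u,u)$, $(u,r)$, $(r,r)$ entries involve only derivative terms of $X^u, X^r$, while the $(b,c)$ entry receives a $-2X^r/r^3\,\delta^{bc}$ contribution from transporting $\eta^{bc}$ along $X^r\del_r^B$, and this is precisely what produces the cancellation of the $X^r/r$ term visible in \eqref{A_coeff4}. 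Inserting these four components, together with the trace factor $\del_u^B X^u + \del_r^B X^r + 2(X^r/r - X\ln\Omega)$, into the master formula yields the four stated expressions.

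The only real obstacle is the bookkeeping in the second step: one must carefully track the difference between the Cartesian Bondi partials and the ordinary Cartesian partials via $\del_i^B = \del_i + \omega^i \del_t$, and in particular verify that $\omega^i\del_i^B$ equals $\del_r^B$ (not merely $\del_r$), since this is what generates the $2X^r/r$ needed to balance the angular sector. Everything else is direct substitution into the polar Bondi inverse metric.
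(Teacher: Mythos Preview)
Your proposal is correct and follows essentially the same route as the paper: both arguments recognize that \eqref{pi_hat_formula} packages as ${}^{(X)}\widehat{\pi}=-\mathcal{L}_X\eta^{-1}-(\del_\gamma X^\gamma)\eta^{-1}$, compute the Bondi divergence via the decomposition $X^i=\overline X^i+\omega^i X^r$ together with $\del_i^B\omega^i=2/r$, and then read off the four components using the polar Bondi inverse metric. The only cosmetic difference is that the paper writes out the full expression for $\Acal^{\alpha\beta}$ (keeping the three Lie-derivative terms separate) before substituting, whereas you propose to compute $(\mathcal{L}_X\eta^{-1})^{\alpha\beta}$ first and then combine; the underlying algebra is identical.
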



\begin{lemma}[Identities for the flux terms]
\label{lem-post} 
Let $\Omega\geq 0$ be a weight and $X$ be a smooth vector field $X$ in Bondi coordinates. Then 
one has the following
identity for boundary terms on the equation \eqref{div_identity1_cut} (where $\Omega>0$)
\bel{form_bdry}
\aligned
{}^{(X)}\! \td{P}_\alpha N{'}^\alpha [\Phi] 
& = 
X^{u}(1-{r \over t} )(\del^B_{u}\Phi)^2+
\frac{1}{2}(X^{u}+X^{r}(1+ {r \over t} ))(\del^B_{r}\Phi)^{2} -X^{u}(1-{r \over t} )\del^B_u\Phi\del_r^B\Phi
\\
& \quad 
+ \frac{1}{2}(X^{u}+X^{r}(1-{r \over t} ))|\snabla\Phi|^{2} 
 +
 \frac{\Lambda}{\Omega^2} \Bcal\Phi^2. 
\endaligned
\ee
Assume that  $\Bcal \geq 0$ and $X$ is of the form
\be
X=X^{u}\del^B_{u}+X^{r}\del^B_{r} 
\quad {\text{with }} X^{u}(u,r)\geq 0, \  X^{r}(u,r) \geq 0. 
\label{positive_X}
\ee
Then, within the domain $\Kcal$, the flux term ${}^{(X)}\! \td{P}_\alpha N{'}^\alpha $ is equivalent
to a weighted sum of squares, namely 
\be
{}^{(X)}\! \td P_{\alpha} N{'}^\alpha 
\approx X^{u}(1-{r \over t} )
(\del^B_{u}\Phi)^2
+ \big(X^{u}+X^{r}(1+ {r \over t} )\big)(\del^B_{r}\Phi)^2
+ \big(X^{u}+X^{r}(1-{r \over t} )\big)|\snabla\Phi|^2 + \frac{\Lambda}{\Omega^2} \Bcal\Phi^2.
\label{coercive_bdry1}
\ee
\end{lemma}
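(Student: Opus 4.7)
The plan is to establish both \eqref{form_bdry} and \eqref{coercive_bdry1} by direct algebraic computation in the Bondi frame, exploiting the explicit form of the Minkowski metric \eqref{metric_bondi1} together with the fact that, under hypothesis \eqref{positive_X}, both $X$ and (as we shall see) $N'$ have no angular component, so that every contraction reduces to a $2\times 2$ block in the $(u,r)$ plane.

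First I would express the rescaled normal \eqref{resc_normalv} in the Bondi basis. In Cartesian coordinates one computes $N' = \del_t + (r/t)\del_r$, and the chain rule \eqref{coords_chain_rule} then gives
\[
N' = (1 - r/t)\,\del^B_u + (r/t)\,\del^B_r, \qquad N'^A = 0.
\]
Expanding the flux ${}^{(X)}\td P_\alpha N'^\alpha = \td Q_{\alpha\beta} X^\beta N'^\alpha$ via \eqref{em_tens_cut} yields
\[
{}^{(X)}\td P_\alpha N'^\alpha = \Lambda\bigl[(X\Phi)(N'\Phi) - \tfrac{1}{2}\,\td g(X,N')\bigl(\td g^{-1}(d\Phi,d\Phi) - V\Phi^2\bigr)\bigr].
\]
The Bondi metric components \eqref{metric_bondi1} give $g(X,N') = -X^u - X^r(1 - r/t)$ and $\td g^{-1}(d\Phi,d\Phi) = \Omega^2\bigl(-2\del^B_u\Phi\,\del^B_r\Phi + (\del^B_r\Phi)^2 + |\snabla\Phi|^2\bigr)$. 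Expanding $(X\Phi)(N'\Phi)$ and collecting the coefficients of $(\del^B_u\Phi)^2$, $\del^B_u\Phi\,\del^B_r\Phi$, $(\del^B_r\Phi)^2$ and $|\snabla\Phi|^2$ reproduces, line by line, the four kinetic contributions in \eqref{form_bdry}, while the $V\Phi^2$ piece is matched to $\frac{\Lambda}{\Omega^2}\Bcal\Phi^2$ via the definition \eqref{AB_formulas1}.

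For the coercivity \eqref{coercive_bdry1}, under the sign conventions \eqref{positive_X} the $|\snabla\Phi|^2$ term has the non-negative coefficient $\tfrac{1}{2}(X^u + X^r(1-r/t))$, and the $\Phi^2$ contribution is non-negative since $\Bcal \geq 0$. The only indefinite piece is the cross term $-X^u(1 - r/t)\,\del^B_u\Phi\,\del^B_r\Phi$, which I would absorb via a weighted Cauchy--Schwarz inequality against the diagonal kinetic coefficients $X^u(1 - r/t)$ and $\tfrac{1}{2}(X^u + X^r(1+r/t))$. On $\Kcal$ the weight $1 - r/t$ lies in $(0,1]$, so the $2\times 2$ quadratic form in $(\del^B_u\Phi,\del^B_r\Phi)$ has discriminant bounded below by a fixed positive fraction of the product of its diagonal entries, delivering the desired two-sided equivalence with the sum of squares on the right of \eqref{coercive_bdry1}.

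The main obstacle is calibrating the Cauchy--Schwarz parameter so that the surviving diagonal weights are equivalent, up to universal constants, to the sharp weights $X^u(1-r/t)$ and $X^u + X^r(1+r/t)$ appearing in \eqref{coercive_bdry1}, rather than a weaker combination that could degenerate near the light cone. The uniform bound $1 - r/t \leq 1$ on $\Kcal$, together with the positivity assumptions on $X^u$, $X^r$ and $\Bcal$, is exactly what is needed to push the absorption through.
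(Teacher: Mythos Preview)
Your proposal is correct and follows the same route as the paper: write $N'$ in the Bondi basis, expand $\td Q(X,N')$ using the metric components \eqref{metric_bondi1}, and absorb the single cross term by Young's inequality (the paper fixes a parameter $\eps$ with $1/2<\eps^2<1$, which is exactly your weighted Cauchy--Schwarz/discriminant observation that $c^2\leq \tfrac12\,ab$ for the $2\times 2$ form). The one slip is your claim that the $V\Phi^2$ contribution ``is matched to $\frac{\Lambda}{\Omega^2}\Bcal\Phi^2$ via the definition \eqref{AB_formulas1}'': that formula defines $\Bcal$ through $X(V)$ and $(\mathrm{Tr}\,\Acal)V$, not through the contraction $\td g(X,N')$, and the paper in fact sidesteps the issue by invoking Proposition~\ref{vanish_V} ($V\equiv 0$ for the weights $\Omega_r,\Omega_s$) so that both terms vanish in every case where the lemma is applied.
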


 We now specialize our identities to the weights \eqref{weights_definition}.

\begin{proposition}[Vanishing of the conformal potentials]\label{vanish_V}
Recall that $V= \Omega^3 \Box(\Omega^{-1})$. 
For the conformal weights $\Omega_{r}=r$ and $\Omega_{s}=s^{2}$
and for all $r>0$ one has 
$
\Box(\Omega_{r}^{-1})  =   0$
and $ \Box(\Omega_{s}^{-1})  =   0$. 
In particular,  the corresponding potentials, say $V_{r}$ and $V_{s}$, vanish identically in \eqref{trans_eq}-- \eqref{AB_formulas1}, and on each hypersurface of constant $s$ the flux term in the conformal multiplier identity \eqref{div_identity1_cut} is positive definite.
\end{proposition}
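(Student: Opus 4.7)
The proposition breaks naturally into three pieces: the two vanishing identities $\Box(\Omega_r^{-1}) = 0$ and $\Box(\Omega_s^{-1}) = 0$, and the resulting positivity of the conformal flux. My plan is to dispatch the first two by direct calculation and then feed the outcome into the flux identity from Lemma~\ref{lem-post}.

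For $\Omega_r^{-1} = 1/r$, the identity is immediate: up to a constant, $1/r$ is the Newtonian fundamental solution of the Euclidean Laplacian, so $\Delta_x(1/r) = 0$ for $r > 0$; and since $1/r$ has no $t$-dependence, $\del_t^2(1/r) = 0$. Hence $\Box(1/r) = -\del_t^2(1/r) + \Delta_x(1/r) = 0$ on the relevant region $r > 0$.

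For $\Omega_s^{-1} = 1/s^2$ with $s = \sqrt{t^2 - r^2}$, I would compute $\Box$ acting on spherically symmetric functions $f(s)$ depending only on the hyperboloidal parameter. Using $\del_t s = t/s$, $\del_r s = -r/s$, together with the spherically-symmetric form $\Box = -\del_t^2 + \del_r^2 + 2 r^{-1} \del_r$, and the identity $r^2 - t^2 = -s^2$, a short chain-rule calculation yields
\[
\Box f(s) \;=\; -f''(s) - \frac{3}{s}\, f'(s).
\]
Specializing to $f(s) = s^{-2}$, with $f'(s) = -2 s^{-3}$ and $f''(s) = 6 s^{-4}$, produces the desired cancellation $\Box(s^{-2}) = -6 s^{-4} + 6 s^{-4} = 0$.

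With both vanishing identities in hand, the definition $V = \Omega^3 \Box(\Omega^{-1})$ from \eqref{trans_eq} immediately gives $V_r \equiv V_s \equiv 0$, and then $\Bcal = (2\Omega^4)^{-1}\big(X(V) - (\text{Tr}\,\Acal)\,V\big) = 0$ as well, by \eqref{AB_formulas1}. I would then invoke the flux identity \eqref{coercive_bdry1} of Lemma~\ref{lem-post}: once $\Bcal = 0$ and the vector field $X$ satisfies the positivity condition $X^u,X^r \geq 0$ (indeed strict positivity is what will later be imposed), the integrand ${}^{(X)}\!\td{P}_\alpha N'^{\alpha}$ reduces to a non-negative weighted sum of squares in $\del^B_u \Phi$, $\del^B_r \Phi$, and $\snabla \Phi$, and is therefore positive definite on $\Kcal$ (using $1 - r/t > 0$ inside the light cone). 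No step presents a real obstacle; the only piece of genuine computation is the radial formula for $\Box f(s)$, and the main care needed is in tracking the signs and the $r^2 - t^2 = -s^2$ cancellations.
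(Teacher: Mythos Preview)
Your argument is correct. The two vanishing computations are right (the radial formula $\Box f(s) = -f''(s) - 3s^{-1}f'(s)$ is exactly what the chain rule gives, and the specialization to $f(s)=s^{-2}$ cancels cleanly), and your deduction of $\Bcal=0$ and the appeal to Lemma~\ref{lem-post} for the flux positivity is the intended logic.

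The route differs mildly from the paper's. The paper carries out both computations in Bondi coordinates, writing $\Omega_s^{-1}=(u^2+2ru)^{-1}$ and applying the operator \eqref{wave_bondi} term by term, which keeps everything inside the coordinate framework used in the rest of Section~\ref{section4}. Your version is more intrinsic: recognizing $r^{-1}$ as the Newtonian fundamental solution and deriving a single ODE-type formula for $\Box$ on functions of $s$ is arguably cleaner and makes the reason for the cancellation (namely that $s^{-2}$ solves the radial equation $f''+3s^{-1}f'=0$) transparent. Both approaches are short direct calculations, so neither buys a real technical advantage; yours is a bit more conceptual, the paper's is more consistent with its ambient notation. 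Note also that the paper's own proof omits any explicit discussion of the flux positivity, leaving it implicit in Lemma~\ref{lem-post}, whereas you spell it out---which is a small improvement in completeness.
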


As we will see later, for appropriate choices of vector field $X$,
the divergence term in the identity \eqref{div_identity1_cut} 
also has a good sign.

\begin{proof}
For the weight of $\Omega_{r}=r$ it suffices to check that
$V_{r}$ vanishes away from the singular set $r=0$.
A simple calculation using Bondi coordinates gives:
$
\Box(\Omega_{r}^{-1})  =
( (\del^B_{r})^2  + 2r^{-1} \del^B_{r}) (r^{-1}) = 0 
$
valid on the set $\Kcal\cap \{ r>0 \}$.
For the second weight, since $\Kcal\cap \{s^{2} =0\} = \emptyset$,
it suffices to check that $V_{s}=0$ throughout $\Kcal$.
A calculation gives, for all $(t,x)\in\Kcal$:
\begin{align*}
\Box(\Omega_{s}^{-1})  & = 
(-2\del^B_u \del^B_r 
+ (\del^B_{r})^2 - 2r^{-1}\del^B_u + 2r^{-1} \del^B_{r} )((u^2+2ru)^{-1})  
\\
& = (u^2+2ur)^{-3}(-16(u^2+ur)+4(u^2+2ur)+8u^2) + 4(u+2ur)^{-2} =  0.
\qedhere
\end{align*}
\end{proof}


\section{Conformal hyperboloidal energy estimates} 
\label{section5}

\subsection{Organization of this section}

Our main estimates for the wave equation are established in the present section. After preliminary material on functional or algebraic inequalities, we consider our three choices of conformal weight in \eqref{r_vf_choice1}, \eqref{r_vf_choice2}, and \eqref{s_vf_choice}, below, and we establish multiplier estimates in Proposition~\ref{proposition-555}.  
In turn, we are in a position to give a proof of Theorem~\ref{theor-main_estimate} about the fractional Morawetz energy estimate for the wave equation, as stated in the introduction. 

\subsection{Preliminary estimates}

We begin by restating a Hardy inequality established in \cite[Lemma 2.4]{PLF-YM-book}. 

\begin{lemma}[Hardy inequality] 
For all sufficiently regular and compactly supported functions $\phi$ defined in $\Kcal_{[s_0,s_1]}$ and for all $0<s_0 \leq s$ one has 
\be
\lp{r^{-1}\phi}{L^2(\Hcal_s)} 
\lesssim 
\sum_{i} \lp{\delu_{i}\phi}{L^2(\Hcal_s)}. 
\label{hardy1}
\ee
\end{lemma}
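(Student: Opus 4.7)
The plan is to reduce the stated Hardy inequality on the hyperboloid $\Hcal_s$ to the classical Hardy inequality on Euclidean $\RR^3$. The key observation is that, parametrized by the spatial variables $x = (x^1, x^2, x^3)$ with $t = \sqrt{s^2 + r^2}$, the hyperboloid $\Hcal_s$ is a graph, and the semi-hyperboloidal frame $\{\delu_i\}_{i=1,2,3}$ at a point of $\Hcal_s$ coincides precisely with the pullback of the flat gradient $\del_{x^i}$ under this parametrization.

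Concretely, first I would define $\td\phi(x) := \phi(\sqrt{s^2 + r^2}, x)$ for $x \in \RR^3$, so that $\td\phi$ inherits the spatial compact support of $\phi$. Then by the chain rule,
\[
\del_{x^i} \td\phi(x) = \del_i \phi + \frac{x^i}{\sqrt{s^2 + r^2}}\, \del_t \phi = (\delu_i \phi)\big|_{\Hcal_s}.
\]
Since the measure used in the norm $\lp{\cdot}{L^2(\Hcal_s)}$ is the flat volume $dx$ on $\RR^3$ (pulled back through the graph parametrization), this identity converts both sides of the claimed inequality into the corresponding quantities for $\td\phi$ on $\RR^3$.

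Next, I would invoke the classical three-dimensional Hardy inequality
\[
\int_{\RR^3} \frac{|\td\phi|^2}{r^2}\, dx \,\lesssim\, \int_{\RR^3} |\nabla_x \td\phi|^2\, dx,
\]
which follows from integration by parts using the identity $\operatorname{div}(x/r^2) = 1/r^2$ in three dimensions, followed by Cauchy--Schwarz. Since $\phi$ is compactly supported in $\Kcal_{[s_0, s_1]}$, the trace $\td\phi$ is compactly supported in $\RR^3$, so there is no boundary term and the integration by parts is justified. Combining this with the frame identity above yields exactly \eqref{hardy1}.

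The only genuine issue to check is the equivalence of the measures: the norm $\lp{\cdot}{L^2(\Hcal_s)}$ used in the paper is defined with the Euclidean $dx$ rather than the induced Riemannian volume $(s/t)dx$, so the pullback is a direct isometry and there are no extra weights to track. No additional ingredient beyond the flat Hardy inequality is needed, and I expect no serious obstacle: this is essentially a bookkeeping argument, the substance of which is the observation that the $\delu_i$ are the intrinsic coordinate derivatives of $\Hcal_s$ viewed as a graph.
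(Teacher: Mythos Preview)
Your proposal is correct. The paper itself does not give a proof of this lemma but simply cites \cite[Lemma~2.4]{PLF-YM-book}; your argument---parametrizing $\Hcal_s$ as a graph over $\RR^3$, identifying $\delu_i$ with the flat coordinate derivatives of the pulled-back function, and noting that the norm $\lp{\cdot}{L^2(\Hcal_s)}$ uses the flat measure $dx$---is exactly the standard reduction to the Euclidean Hardy inequality and is what one finds in that reference.
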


\begin{lemma}[Estimates for the weights]
\label{pos_weight1} 
The following positivity condition holds in the domain $\Kcal$ 
\bel{pos_weight2}
q(\underline{u},u)_{a}:=\Big(\frac{\baru^{2a}-u^{2a}}{r}-2a (\baru^{2a-1}+u^{2a-1})\Big) \geq  0, 
\qquad  a \in [1/2, 1], 
\ee
in which the expression vanishes identically at the end points $a=1$ and $a=1/2$. Additionally, one has 
\bel{pos_weight3}
(\baru^{2a}-u^{2a}) \approx a(rt^{2a-1}), 
\qquad  a \in [1/2, 1].
\ee
\end{lemma}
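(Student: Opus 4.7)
The plan is to rewrite $q$ in a form that exposes it as a trapezoidal-rule error. Setting $p := 2a \in [1,2]$ and using $\baru - u = 2r$, I obtain
\[
q(\baru,u)_a \;=\; 2\Bigl[\tfrac{\baru^p - u^p}{\baru - u} - \tfrac{p}{2}\bigl(\baru^{p-1} + u^{p-1}\bigr)\Bigr].
\]
With $f(s) := s^p$, the bracket is exactly the difference between the secant slope of $f$ on $[u,\baru]$ and the average of $f'$ at the two endpoints. So $q \geq 0$ is equivalent to the trapezoidal-rule inequality
\[
\int_u^{\baru} f'(s)\,ds \;\geq\; \tfrac{\baru-u}{2}\bigl(f'(u) + f'(\baru)\bigr),
\]
which is standard whenever the integrand $g := f' = p s^{p-1}$ is concave.

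The verification of concavity is a one-line calculation: $g''(s) = p(p-1)(p-2) s^{p-3}$ is nonpositive on $(0,\infty)$ for every $p \in [1,2]$, since $p \geq 0$, $p-1 \geq 0$, and $p-2 \leq 0$. Note that $u = t-r > 1$ throughout $\Kcal$, so the integrand is smooth on $[u,\baru]$ and there is no issue near $s=0$. I would then integrate the chord inequality $g(s) \geq \tfrac{\baru-s}{\baru-u} g(u) + \tfrac{s-u}{\baru - u} g(\baru)$ over $[u,\baru]$ to recover the trapezoidal bound, yielding $q \geq 0$. The vanishing at $a = 1/2$ (where $g$ is constant) and $a = 1$ (where $g$ is linear) is immediate, since the chord inequality saturates in those cases; this is precisely what makes $2a(\baru^{2a-1}+u^{2a-1})$ the sharp correction.

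For the equivalence $\baru^{2a} - u^{2a} \approx a r t^{2a-1}$, I would rescale by setting $\rho := r/t \in [0,1]$ and factoring out $t^{2a}$: writing $h(\rho) := (1+\rho)^{2a} - (1-\rho)^{2a}$, one has $\baru^{2a} - u^{2a} = t^{2a} h(\rho)$ and $a r t^{2a-1} = a \rho\, t^{2a}$. It therefore suffices to show $h(\rho)/(a\rho) \approx 1$ uniformly for $\rho \in (0,1]$ and $a \in [1/2,1]$. Since $h(0) = 0$ and
\[
h'(\rho) \;=\; 2a\bigl[(1+\rho)^{2a-1} + (1-\rho)^{2a-1}\bigr],
\]
the condition $2a-1 \in [0,1]$ forces the first bracketed term into $[1,2]$ and the second into $[0,1]$, so $h'(\rho) \in [2a, 6a]$. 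Integrating from $0$ yields $h(\rho) \approx a\rho$ with \emph{absolute} constants that do not degenerate as $a \to 1/2$ or $a \to 1$.

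I expect no substantive obstacle. The one conceptual step worth flagging is the recognition that $a \in \{1/2, 1\}$ are precisely the exponents at which $p s^{p-1}$ loses strict concavity; this simultaneously explains the positivity of $q$ on $[1/2,1]$ and its vanishing at the two endpoints, and clarifies why the particular combination $2a(\baru^{2a-1}+u^{2a-1})$ appears in \eqref{pos_weight2}.
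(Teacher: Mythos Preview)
Your proof is correct. For the equivalence \eqref{pos_weight3} your argument is essentially identical to the paper's: both rescale by $t$, introduce the function $h(\rho)=(1+\rho)^{2a}-(1-\rho)^{2a}$, bound its derivative uniformly by multiples of $a$, and integrate.

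For the positivity \eqref{pos_weight2} your route genuinely differs from the paper's. The paper fixes $t$, sets $f(r)=(t+r)^{2a}-(t-r)^{2a}$, and checks directly that $f''(r)=(2a)(2a-1)\bigl(\baru^{2a-2}-u^{2a-2}\bigr)\leq 0$; the statement $q\geq 0$ then reduces to the tangent-line inequality $f(0)\leq f(r)+f'(r)(0-r)$ for the concave function $f$. You instead interpret $q$ as twice the trapezoidal-rule error for the integrand $g(s)=2a\,s^{2a-1}$ on $[u,\baru]$ and invoke the concavity of $g$ (i.e.\ $f'''\leq 0$ for $f(s)=s^{2a}$). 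Both arguments are one-derivative calculations, but they sit at different levels: the paper differentiates a two-variable combination in $r$, while you work with a single power function in $s$. Your framing has the advantage that the endpoint vanishing at $a=1/2$ and $a=1$ is immediately visible (constant and linear integrands make the trapezoidal rule exact), whereas in the paper's argument this emerges from the vanishing factors $(2a-1)$ and the bracket $\baru^{0}-u^{0}$ in $f''$.
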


\begin{proof}
As in \cite{LindbladSterbenz}, we introduce $\underline{v}(\baru)= \baru^{2a}$,
$v(u)= u^{2a}$. For each fixed $t$, we introduce the function $f(r) = \underline{v}(t+r)-v(t- r)$.
This function satisfies $f(0)=0$ and $f'(r)= \dot{\underline{v}}+ \dot{v}$. The inequality
\eqref{pos_weight2} is then equivalent to
$f(0)-f(r)-f'(r)(0-r)\leq 0$.  
An immediate computation gives us 
\begin{align}
f'(r) \ & =  2a(t+r)^{2a-1}+2a(t- r)^{2a-1},  \qquad 
f''(r)  =  (2a-1)2a(\baru^{2a-2}-|u|^{2a-2}).
\label{pos_weight4}
\end{align}
By our conditions on $a$, we have $0\leq (2a-1)2a$ and $(2a-2)\leq 0$.
Therefore, $f''(r)\leq 0$ and the desired inequality is a convexity inequality.
Next, to derive the equation \eqref{pos_weight3} we rewrite the expression:
$\baru^{2a}-u^{2a} = t^{2a}\big((1+t^{-1}r)^{2a}-(1-t^{-1}r)^{2a} \big)$
and introduce the function $g(\omega):=(1+ \omega)^{2a}-(1- \omega)^{2a}$
with $\omega=t^{-1}r$.  It suffices now to show that there exist constants $C_0,C_1>0$ such that
$
C_0a\omega \leq g(\omega)
 \leq C_1a\omega, \label{pos_weight5}
$
with $0\leq \omega<1$ inside the domain $\Kcal$. Taking the derivative of this function gives us
\be
g'(\omega)=2a\big((1+ \omega)^{2a-1}+(1- \omega)^{2a-1} \big). \notag
\ee
For all $a\in[1/2,1]$, $g'(\omega)$ clearly satisfies the explicit bound
$2a \leq g'(\omega)
 \leq 2a(2^{2a-1}+1)$. 
Integrating these inequalities yields \eqref{pos_weight5}
with $C_0=2$ and $C_1=2(2^{2a-1}+1)$, which completes the proof of the lemma.
\end{proof}

\begin{remark}\label{distinct_multvf}
In the range $a\in[0,1/2)$, we have $q(\underline{u},u)_{a} \leq   0$,
and the expression vanishes identically for $a=0$.
Comparing this result with the inequality \eqref{pos_weight2}
we see why we need distinct vector field multipliers in order to cover both of the ranges
$a\in [0,1/2)$ and $a\in [1/2,1]$.
\end{remark}

The following Sobolev estimate is a consequence of the standard Sobolev estimate on hyperboloids. Observe that it allows any non-negative value of the exponent $a$. It is derived  from the standard Sobolev estimate on hyperboloids applied to $(s^a /t) \phi$ and we omit the details. 

\begin{lemma}
[Weighted Sobolev estimate on hyperboloids]
Let $s_1\geq s\geq s_0$ and $a \geq 0$. For any sufficiently regular function $\phi$ supported
in the interior of the future light cone $u=0$, one has 
\begin{equation}
\lp{s^a \tau_+^{1/2}\phi(s)}{L^\infty(\Hcal_s)}
\lesssim 
\sum_{|J|\leq 2} \lp{\frac{s^a }{t}L^J \phi(s)}{L^2 (\Hcal_s)}. \label{KlSob}
\end{equation}
\end{lemma}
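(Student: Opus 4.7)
The plan is to reduce to the standard (unweighted) Klainerman--Sobolev estimate on hyperboloids, which for any $\psi$ supported in the interior of the future light cone reads
\[
\sup_{\Hcal_s} t^{3/2}\, |\psi|\;\lesssim\;\sum_{|J|\leq 2}\|L^J \psi\|_{L^2(\Hcal_s)},
\]
and to apply it with the auxiliary function $\psi := (s^a/t)\phi$. Since $s$ is constant along a single hyperboloid $\Hcal_s$, the left-hand side simplifies to $t^{3/2}(s^a/t)|\phi| = t^{1/2}s^a |\phi|$; and inside $\Kcal$ one has $t \approx \tau_+$, so this is comparable to $s^a \tau_+^{1/2} |\phi|$, which is exactly the left-hand side of the claimed estimate.

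The substantive step is then to bound $\|L^J((s^a/t)\phi)\|_{L^2(\Hcal_s)}$ by a sum of terms of the form $\|(s^a/t) L^{J'}\phi\|_{L^2(\Hcal_s)}$ with $|J'|\leq |J|$, via a commutator analysis. The key observation is that the Lorentz boosts annihilate the hyperboloidal time function: using $\partial_i s = -x^i/s$ and $\partial_t s = t/s$, one computes
\[
L_i s \;=\; t\,\partial_i s + x^i\,\partial_t s \;=\; -tx^i/s + x^i t/s \;=\; 0,
\]
whence $L^J(s^a)=0$ whenever $|J|\geq 1$. Thus every boost that falls on the weight $s^a/t$ must in fact act on the factor $1/t$ alone. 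A short induction based on the identities $L_i(1/t) = -(1/t)(x^i/t)$ and $L_i(x^j/t) = \delta^{ij}-(x^i/t)(x^j/t)$ then shows that $L^{J'}(s^a/t)$ equals $(s^a/t)$ times a polynomial in the bounded quantities $x^j/t$ (bounded since $|x|/t<1$ inside $\Kcal$). Distributing $L^J$ across the product $(s^a/t)\cdot\phi$ by Leibniz's rule yields the required comparison.

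The only point requiring genuine care is the commutator calculation just sketched; once it is in hand, the statement follows immediately from the standard hyperboloidal Sobolev embedding. I do not anticipate a real obstacle here, since the weight $s^a/t$ is explicit, the identity $L_i s = 0$ is clean, and the functions $x^j/t$ are uniformly bounded inside $\Kcal$, making the commutator structure completely transparent --- which is presumably why the authors chose to omit the details.
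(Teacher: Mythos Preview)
Your proposal is correct and follows exactly the route indicated by the paper, which states that the lemma ``is derived from the standard Sobolev estimate on hyperboloids applied to $(s^a /t) \phi$'' and omits the details. Your commutator analysis (using $L_i s = 0$ and the fact that $L^{J'}(1/t)$ equals $1/t$ times a bounded polynomial in $x^j/t$) is precisely the computation needed to fill in those omitted details.
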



\subsection{Conformal hyperboloidal energy multipliers}

We make now three choices of weights and vector fields and apply the conformal multiplier identity \eqref{div_identity1_cut}:
\begin{align}
K^a  & = (1+u^{2a})\del^B_{u}+ \frac{1}{2}(\baru^{2a}
-u^{2a})\del^B_{r} \, \qquad 
&&\Omega=r, \qquad
 \Lambda=1, \qquad 
 && 1/2\leq a\leq 1,   \label{r_vf_choice1}
\\
Y^a  & = (1+2a\tau_+^{2a}\tau_0)\del^B_{u}
+r^{2a}\del^B_{r}, \qquad &&\Omega=r, 
\qquad \Lambda=1, \qquad 
&& 0\leq a\leq 1/2, \label{r_vf_choice2}
\\
K & = (1+u^{2})\del^B_{u}
+2(u+r)r\del^B_{r}, \qquad &&\Omega=s^2, 
\qquad \Lambda=s^{2a-2}, \qquad 
&& 0\leq a\leq 1. \label{s_vf_choice}
\end{align}
This choice of vector fields can be motivated as follows: \eqref{r_vf_choice1}
smoothly interpolates between the scaling vector field ($a=1/2$) and the classical Morawetz vector field
$(a=1)$. The second choice  \eqref{r_vf_choice2} is analogous to the scaling vector field
with a scaling that is chosen to produce a good sign in the divergence terms (cf.~Remark
\ref{distinct_multvf}). Furthermore,
this vector field smoothly interpolates between the (Minkowski) Killling field
$\partial_t$ and the scaling vector field. Last, for the
third choice  \eqref{s_vf_choice} there is some flexibility: as long the vector field $\nabla^{\alpha}\Lambda$
is timelike and future directed we would have a good sign for the flux terms. However, the
decay statement needed in the present paper requires us to chose a weight $\Lambda$
of the form above.

\begin{proposition}[Main multiplier estimates]
\label{proposition-555} 
Let $\phi$ be a sufficiently regular and compactly supported solution to the equation \eqref{equa-Cauchy-wave-eq}. For any $s_0 \leq s \leq s_1$,
$ 0\leq a \leq 1$ the following estimates hold: 
\begin{align}
\sup_{s_0\leq s\leq s_1}\lp{\phi(s)}{\Ewave} 
  & \lesssim  \lp{\phi(s_0)}{\Ewave}+
\lp{F}{N_{W}^{0}[s_0,s_1]},
 \label{energy_est1}\\
\sup_{s_0\leq s\leq s_1}\lp{r\phi(s)}{\Eonea} 
  & \lesssim  \lp{r\phi(s_0)}{\Eonea} 
  + \lp{F}{N_{W}^a [s_0,s_1]},
 \label{est_weight1} \\
\sup_{s_0\leq s\leq s_1}\lp{s^2\phi(s)}{\Etwoa} 
  & \lesssim  \lp{s_{0}^2\phi(s_0)}{\Etwoa} 
  + \lp{F}{N_{
  W}^a [s_0,s_1]}.
 \label{est_weight2}
\end{align}
\end{proposition}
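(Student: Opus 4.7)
The plan is to apply the conformal multiplier identity \eqref{div_identity1_cut}--\eqref{td_div_iden_cut3} once for each of the three triples $(X,\Omega,\Lambda)$ prescribed in \eqref{r_vf_choice1}--\eqref{s_vf_choice}, and then to close a Gr\"onwall argument in $s$. Proposition~\ref{vanish_V} ensures $V_r = V_s = 0$ inside $\Kcal$, so the scalar $\Bcal$ in \eqref{AB_formulas1} drops out of both \eqref{td_div_iden_cut3} and the flux formula \eqref{form_bdry}; only the $\Acal$-term, and (for the third choice) the $\Ccal$-term, remain to be controlled in the bulk. The inequality \eqref{energy_est1} is the degenerate case $X=\del_t$, $\Omega=\Lambda=1$, for which $\Acal\equiv 0$ and which reduces to the classical energy estimate coming from \eqref{divergence2}; I focus below on \eqref{est_weight1} and \eqref{est_weight2}.

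\textbf{Flux bound.} In each of the three cases the Bondi components $X^u$ and $X^r$ are non-negative throughout $\Kcal$, since $\baru\geq u\geq 0$; Lemma~\ref{lem-post} then gives the pointwise equivalence \eqref{coercive_bdry1}, and the flux bound reduces to matching the resulting sum of squares with the conformal energies \eqref{conj_conf_en1}--\eqref{conj_conf_en2}. Passing from $\Phi=\Omega\phi$ back to $\phi$ and using $1-r/t=u/t$, $1+r/t=\baru/t$, together with the equivalence $\baru^{2a}-u^{2a}\approx a\,r\,t^{2a-1}$ from \eqref{pos_weight3}, the flux densities of $K^a$ and $Y^a$ reproduce \eqref{conj_conf_en1} on $[1/2,1]$ and $[0,1/2]$ respectively, while that of $K$, accounting for the extra factor $\Lambda\Omega^{-2}=s^{2a-6}$, reproduces \eqref{conj_conf_en2}.

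\textbf{Bulk sign.} Using Lemma~\ref{moved-to-Append-A1} I compute $\Acal^{\alpha\beta}$ in polar Bondi coordinates for each vector field. For $K^a$ the coefficients \eqref{A_coeff1}--\eqref{A_coeff4} assemble into a quadratic form in $\del^B\Phi$ whose tangential and cross components are proportional to $q(\baru,u)_a=(\baru^{2a}-u^{2a})/r-2a(\baru^{2a-1}+u^{2a-1})$, which is non-negative on $[1/2,1]$ by \eqref{pos_weight2}. For $Y^a$ on $[0,1/2]$ the positive contribution comes instead from $\del^B_r(r^{2a})=2ar^{2a-1}\geq 0$, which is precisely what motivates this second multiplier (Remark~\ref{distinct_multvf}). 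For the third vector field $K$, the weight $\Lambda=s^{2a-2}$ is non-constant and \eqref{td_div_iden_cut3} contributes the extra term $\Omega^{-2}\Lambda^{-1}\Ccal^\alpha\,{}^{(X)}\td P_\alpha$; since $\Ccal^\alpha=(2a-2)s^{2a-3}\nabla^\alpha s$ is future-timelike (recall $\nabla s$ is past-timelike with norm $-1$, and $2a-2\leq 0$), the dominant-energy property of $\td Q$ forces $\Ccal^\alpha\,{}^{(X)}\td P_\alpha\geq 0$. Combined with the non-negativity of $\Acal^{\alpha\beta}\del_\alpha\Phi\del_\beta\Phi$ produced by the same analysis applied to $K$, the bulk integrand has a good sign.

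\textbf{Source and closure.} The source contribution $\Lambda F\Omega^{-1}X\Phi$ in \eqref{td_div_iden_cut3} is estimated by Cauchy--Schwarz on each $\Hcal_s$, producing a factor $\tau_+^a(s/t)|F|$ whose $s$-integral matches exactly $\lp{F}{N_W^a[s_0,s_1]}$, times the square root of the flux energy. Plugging into \eqref{div_identity1_cut} and running a standard Gr\"onwall in $s$ then yields \eqref{est_weight1}--\eqref{est_weight2}. The main obstacle is the bulk sign analysis for the third vector field: identifying the exact algebraic combination of the $\Acal$- and $\Ccal$-contributions and checking that it produces precisely the weights $(1+s^{a-1}\tau_-)^2$ and $s^{2(a-1)}\tau_+^2$ of \eqref{conj_conf_en2}, rather than some weaker, non-coercive combination, requires a delicate sign-tracking in Bondi coordinates and is the only step at which a mistake would destroy the argument.
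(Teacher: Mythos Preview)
Your approach is exactly the paper's: three applications of the conformal multiplier identity with the triples \eqref{r_vf_choice1}--\eqref{s_vf_choice}, Lemma~\ref{lem-post} for flux coercivity, Lemma~\ref{moved-to-Append-A1} plus \eqref{pos_weight2} for the bulk sign, and a sup-and-divide closure (what you call Gr\"onwall).

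One clarification worth making: the ``main obstacle'' you flag for the third multiplier is in fact a non-issue. For $K=(1+u^2)\del^B_u+2(u+r)r\del^B_r$ with $\Omega=s^2$, a direct computation with \eqref{A_coeff} (using $K^r/r-K\ln s^2=0$, which you can check in one line) gives $\Acal^{\alpha\beta}\equiv 0$ identically, not merely $\geq 0$; this is just the statement that $K$ is conformally Killing for $s^{-4}\eta$. So the only bulk term for the third estimate is the $\Ccal$-term, and your dominant-energy argument disposes of it cleanly. The weights in \eqref{conj_conf_en2} are produced entirely by the flux \eqref{coercive_bdry1} together with the extra factor $\Lambda\Omega^{-2}=s^{2a-6}$; the bulk terms need only have a sign so they can be dropped, not match any particular weight. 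With that understood, nothing delicate remains.
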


\begin{proof}[Proof of the estimate \eqref{energy_est1}]
Substituting the vector field $X= \del^B_u$ as well as $\Omega=1$
into the equation \eqref{divergence2}
and using the fact that $\mathcal{L}_{\del^B_u}\eta=0$
give us
\be
\int_{s=s_1} \!{}^{(X)}\!P_{\alpha}N^\alpha  \,
\Vol    =  \int_{s=s_0} \!{}^{(X)}\!P_{\alpha}N^\alpha  \,
\Vol  - \int_{s_0}^{s_1}\!\!\!\int_{s=s'}
 F  \del^B_u\phi  \  \Vol ds'.\label{energy_T1}
\ee

\vskip.3cm 

1. {\bf Boundary terms.} In view of the equation \eqref{rescaleddx}
it suffices to compute the integral of ${}^{(X)}\!P_{\alpha}N{'}^\alpha  dx$.
Using the equation \eqref{form_bdry}
gives us
\begin{align}
{}^{(X)}\!P_{\alpha} N{'}^\alpha    & = (1-{r \over t} )(\del^B_{u}\phi)^2+
\frac{1}{2}((\del^B_{r}\phi)^2+|\snabla\phi|^2)
-(1-{r \over t} )\del^B_u\phi\del_r^B\phi. \notag 
\end{align}
Applying the equation \eqref{coercive_bdry1} and integrating then yields (with $\td \phi=\phi$
in this case)
\begin{align}
\int_{s=s'} {}^{(X)}\!P_{\alpha} N^\alpha  \Vol 
= \int_{s=s'} {}^{(X)}\!P_{\alpha}N{'}^\alpha  dx
\approx
\int_{s=s'}(1-{r \over t} )
(\del^B_{u}\phi)^2
+(\del^B_{r}\phi)^2
+|\snabla\phi|^2  dx  
\approx \lp{ \phi(s)}{\Ewave}^2, 
\label{energy_T3}
\end{align}
where we used 
\be
1-{r \over t} = \Big(\frac{s}{t} \big)^2\frac{t}{t+r}
\approx \Big(\frac{s}{t} \big)^2
\quad 
\text{  in the domain $\Kcal_{[s_0,+\infty)}$.}
\label{weight_basic1}
\ee

\vskip.3cm 

2. {\bf Source term}. The term containing $F \del^B_u\phi$ has no sign
therefore we take absolute value and apply Cauchy-Schwarz to get
\begin{align}
\big|\int_{s_0}^{s_1}\!\!\!\int_{s=s'}
 F \del^B_u\phi  \  \Vol ds'\big|  & \leq  \int_{s_0}^{s_1}\!\!\!\int_{s=s'}
 \big|F \frac{s}{t}\del^B_{u}\phi \big|  \  dxds'  
 \leq   \lp{F}{N_{W}[s_0,s_1]}
   \sup_{s_0\leq s\leq s_1}\lp{\phi(s)}{\Ewave} 
 \label{energy_T4}.
\end{align}

\vskip.3cm 

3. {\bf Putting it all together.} Substituting \eqref{energy_T3} and \eqref{energy_T4}
into the equation \eqref{energy_T1} yields us 
$$
\lp{\phi(s_1)}{\Ewave}^{2} \lesssim  \lp{\phi(s_0)}{\Ewave}^{2}+
\lp{F}{N_{W}[s_0,s_1]}  
\sup_{s_0\leq s\leq s_1}\lp{\phi(s)}{\Ewave}.
$$
Taking the supremum and dividing by
$\sup\limits_{s_0\leq s\leq s_1}\lp{\phi(s)}{\Ewave}$
complete the proof of the estimate \eqref{energy_est1}.
\end{proof}

\begin{proof}[Proof of the estimate \eqref{est_weight1}
in the range $\frac{1}{2}\leq a \leq 1$]
We use the multiplier identity \eqref{div_identity1_cut}
with the choice  \eqref{r_vf_choice1}, that is, 
\[X=K^a  = (1+u^{2a})\del^B_{u}+ \frac{1}{2}(\baru^{2a}
-u^{2a})\del^B_{r} \, \qquad \Omega=r, \qquad
 \Lambda=1.  \]
Setting $\Phi=r\phi$ and using the formulas in line \eqref{AB_formulas1} we find 
\bel{energy_Khigh}
\aligned
&  \int_{ s=s_1} \!\!
{}^{(X )} \td{P}_\alpha[\Phi] N^\alpha  r^{-2} \
\Vol + \int_{s_0}^{s_1}\!\!\!\int_{s=s'} 
 r^{-2} \Acal^{\alpha\beta}\del_\alpha
\Phi\del_\beta\Phi  \  \Vol ds'   
\\
& = \int_{ s=s_0}\!\! {}^{(X )} \td{P}_\alpha
[\Phi]
N^\alpha  r^{-2} \ \Vol  
-  \int_{s_0}^{s_1}\!\!\!\int_{s=s'} 
F   r^{-1}X\Phi  \  \Vol ds',
\endaligned
\ee
where we used Lemma \ref{vanish_V}
to see that $\Bcal(X, r)\equiv0$.

\vskip.3cm 

1. {\bf Terms $\Acal^{\alpha\beta}$.} 
A simple computation gives us
\be
\frac{(K^a )^{r}}{r}-K^a (\ln r )=
\frac{(K^a )^{r}}{r}(1- \del^B_{r}r)= 0, 
\label{xr_term}
\ee
and substituting this \eqref{A_coeff} yields us 
\begin{subequations}
\begin{align}
\Acal^{uu}  & = 
-   \del_r^B(X^u)=
-   \del_r^B(u^{2a})= 0,
\qquad \qquad \qquad 
\Acal^{ur}  =  
 \frac{1}{2}\del_r^B X^u
= \frac{1}{2}  \del_r^B(u^{2a})= 0,
\, \notag \\ 
\Acal^{rr}  & =  
\frac{1}{2}\del_r^B X^r - \frac{1}{2}{\del_u^B} X^u-  {\del_u^B} X^r
 =  \frac{1}{4}\del^B_{r}(\baru^{2a}-u^{2a})
 - \frac{1}{2}\del^a _{u}(u^{2a})- \frac{1}{2}\del^B_{u}(\baru^{2a}
 -u^{2a}) \notag \\
 & = \frac{1}{4}(4a\baru^{2a-1})-
 \frac{1}{2}(2au^{2a-1})- \frac{1}{2}(2a\baru^{2a-1}
 -2au^{2a-1}) = 0, \notag \\
\Acal^{bc}  & = \big(\frac{X^{r}}{r}
- \frac{1}{2}{\del_u^B} X^u- \frac{1}{2}\del_r^B X^r
 \big)\frac{\delta^{bc}}{r^{2}}
= \big( \frac{1}{2r}(\baru^{2a}-u^{2a})
- \frac{1}{2}(2a\baru^{2a-1})- \frac{1}{4}
(4a\baru^{2a-1}) \big)\frac{\delta^{bc}}{r^{2}} \notag\\
& = \big( \frac{1}{2r}(\baru^{2a}-u^{2a})
-a(\baru^{2a-1}+u^{2a-1})\big)\frac{\delta^{bc}}{r^{2}}
=q(\underline{u},u)\frac{\delta^{bc}}{2r^{2}}
 \notag.
\end{align}
\end{subequations}
We then apply Lemma \ref{pos_weight1} to the weight in the last line above 
and conclude that, in the range $\frac{1}{2}\leq a \leq 1$, 
$\Acal^{bc}\geq 0$ and therefore
\bel{A_onehalf_pos}
A^{\alpha\beta}\del_\alpha
\Phi\del_\beta\Phi  \geq 0, 
\ee
where equality holds in the cases $a= 1/2$ and $a=1$.

\vskip.3cm

2. {\bf Boundary terms.} 
We compute ${}^{(X )} \td P_{\alpha} N{'}^\alpha $
using the equation \eqref{normalv} in Bondi coordinates:
\begin{align*}
{}^{(X )}\! \td{P}_\alpha N{'}^\alpha  & = 
(1+u^{2a})(1-{r \over t} )(\del^B_{u}\Phi)^2+
\frac{1}{2}((1+u^{2a})+ \frac{1}{2}(\baru^{2a}-u^{2a})(1+ {r \over t} ))(\del^B_{r}\Phi)^{2}\\
& \quad  + \frac{1}{2}((1+u^{2a})+ \frac{1}{2}(\baru^{2a}-u^{2a})
(1-{r \over t} ))|\snabla\Phi|^{2} 
 -(1+u^{2a})(1-{r \over t} )\del^B_u\Phi\del_r^B\Phi.
\end{align*}
Multiplying by $r^{-2}$, integrating,
and applying the equation \eqref{coercive_bdry1}, we obtain 
\begin{align}
& \int_{s=s'} {}^{(X)}\!P_{\alpha}N^\alpha  \, r^{-2}\Vol 
= \int_{s=s'} {}^{(X)}\!P_{\alpha}N{'}^\alpha  \, r^{-2}dx  \label{boundary_Ka1} \\
& \approx
\int_{s=s'}\Big( 
(1+u^{2a})(1-{r \over t} )(\del^B_{u}\Phi)^2+
((1+u^{2a})+(\baru^{2a}-u^{2a})(1+ {r \over t} ))(\del^B_{r}\Phi)^{2}
\notag \\
& \qquad\qquad  \quad +((1+u^{2a})+(\baru^{2a}-u^{2a})
(1-{r \over t} ))|\snabla\Phi|^{2}  \Big)
 \, r^{-2}dx  \notag \\
& \approx \lp{r\phi(s)}{\Eonea}^2.
\notag 
\end{align}
Here, we used the equations \eqref{pos_weight3} and \eqref{weight_basic1} in order 
to control the weights on the last line.

\vskip.3cm

3. {\bf Source term.} We take absolute value and apply Cauchy-Schwarz to get
\begin{align}
\big|\int_{s_0}^{s_1}\!\!\!\int_{s=s'}
F   \frac{K^a \Phi}{r}   \  \Vol ds'\big|  & \lesssim  \int_{s_0}^{s_1}\!\!\!\int_{s=s'}
 \big| F (1+ u^{2a})\frac{s}{t}\frac{\del^B_{u}\Phi}{r} \big|  
 + \big| F \tau_+^{2a}\frac{s}{t}\frac{\del^B_{r}\Phi}{r} \big|  \  dxds' 
  \label{energy_h1_mult} \\ 
& \lesssim \lp{F}{N_{W}^a [s_0,s_1]} 
\sup_{s_0\leq s\leq s_1}\lp{r\phi(s)}{\Eonea}  \notag.
\end{align}

\vskip.3cm

4. {\bf Putting it all together.} Substituting the results of Eqs.
\eqref{A_onehalf_pos}, \eqref{boundary_Ka1},
and \eqref{energy_h1_mult}
into the equation \eqref{energy_Khigh} yields:
$$
\lp{r\phi(s_1)}{\Eonea}^2  \lesssim 
\lp{r\phi(s_0)}{\Eonea}^2+
\lp{F}{N_{W}^a [s_0,s_1]} 
\sup_{s_0\leq s\leq s_1}\lp{r \phi(s)}{\Eonea}. 
$$
Taking the supremum and dividing by
$\sup\limits_{s_0\leq s\leq s_1}\lp{r\phi(s)}{\Eonea}$
complete the proof of \eqref{est_weight1}.
\end{proof}

\begin{proof}[Proof of the estimate \eqref{est_weight1}
for the range $ 0 < a \leq 1/2$]
We now use the multiplier identity the equation \eqref{div_identity1_cut}
with the choice \eqref{r_vf_choice2}, that is, 
\[X=Y^a  = (1+2a\tau_+^{2a}\tau_0)\del^B_{u}
+r^{2a}\del^B_{r}, \qquad \Omega=r ,
\qquad \Lambda=1.\]
Setting $\Phi=r\phi$ and using the formulas in line \eqref{AB_formulas1}, once again we get
$$
\aligned
&  \int_{ s=s_1} \!\!
{}^{(X )} \td{P}_\alpha[\Phi] N^\alpha  r^{-2} \
\Vol + \int_{s_0}^{s_1}\!\!\!\int_{s=s'} 
 r^{-2} A^{\alpha\beta}\del_\alpha
\Phi\del_\beta\Phi  \  \Vol ds'    \label{energy_Klow} 
\\
& = \int_{ s=s_0}\!\! {}^{(X )} \td{P}_\alpha
[\Phi]
N^\alpha  r^{-2} \ \Vol  
-  \int_{s_0}^{s_1}\!\!\!\int_{s=s'} 
F   r^{-1}X \Phi  \  \Vol ds',
\endaligned
$$
where we used Lemma \ref{vanish_V} in order to conclude that $\Bcal(X, r)\equiv0$.

\vskip.3cm

1. {\bf  Terms $\Acal^{\alpha\beta}$.} Since $\Omega_{r}= r$
we apply the equation \eqref{xr_term}, use \eqref{A_coeff}, and find 
\begin{subequations}
\begin{align}
\Acal^{uu}  & = 
-   \del_r^B(X^u)=
-   \del_r^B(2a\tau_+^{2a}\tau_0)= 2a(1-2a) 
\tau^{2a-1}_+ \tau_0, \notag \\
\Acal^{ur}  & =  
 \frac{1}{2}\del_r^B X^u
= \frac{1}{2}   \del_r^B(2a\tau_+^{2a}\tau_0)= 
- a(1-2a) 
\tau^{2a-1}_+ \tau_0
 \notag, 
\end{align}
and 
\begin{align}
\Acal^{rr}  & =  
\frac{1}{2}\del_r^B X^r - \frac{1}{2}{\del_u^B} X^u-  {\del_u^B} X^r
 =   ar^{2a-1}-a\tau_+^{2a-1}\frac{u}{\tau_-}-a(2a-1)
 \tau_+^{2a-1}\tau_0\notag \\
 & = a(1-2a)\tau_+^{2a-2}\underline{u}\tau_0 
 + a(r^{2a-1}- \tau_+^{2a-1}\frac{u}{\tau_-})
  \geq  a(1-2a)\tau_+^{2a-1} \tau_0 
,  \notag \\
\Acal^{bc}  & = \big(\frac{X^{r}}{r}
- \frac{1}{2}{\del_u^B} X^u- \frac{1}{2}\del_r^B X^r
 \big)\frac{\delta^{bc}}{r^{2}}
= \big( r^{2a-1}
-a\tau_+^{2a-1}\frac{u}{\tau_-}-a(2a-1)
 \tau_+^{2a-1}\tau_0-
ar^{2a-1} \big)\frac{\delta^{bc}}{r^{2}} \notag\\
& = \big( (1-a)r^{2a-1}-a\tau_+^{2a-1}\frac{u}{\tau_-}
+ a(1-2a)
 \tau_+^{2a-1}\tau_0\big)\frac{\delta^{bc}}{r^{2}}\geq 0
 \notag, 
\end{align}
\end{subequations}
where we used that
$r^{2a-1} \geq \tau_+^{2a-1}$ in the range $0<a\leq \frac{1}{2}$.
Combining these results gives us
\be
\Acal^{uu}  \geq  0, 
\qquad
 \Acal^{rr}  \geq     0,  
 \qquad 
|\Acal^{ur}|  \leq \sqrt{\Acal^{uu} \Acal^{rr}},
\qquad
 \Acal^{bc} \geq  0.
\ee
An application of the classical inequality of
arithmetic and geometric means then yields the
favorable sign
\be
A^{\alpha\beta}\del_\alpha
\Phi\del_\beta\Phi  \geq 0.
\label{A_onehalf_low_pos}
\ee

\vskip.3cm

2. {\bf Boundary terms.} 
We compute ${}^{(X )}\!P_{\alpha}N^\alpha $
using the equation \eqref{normalv} in Bondi coordinates 
\begin{multline}
{}^{(X)}\! \td{P}_\alpha N{'}^\alpha  
= 
(1+2a\tau^{2a}_+ \tau_0)(1-{r \over t} )(\del^B_{u}\Phi)^2+
\frac{1}{2}(1+2a\tau^{2a}_+ \tau_0+r^{2a}(1+ {r \over t} ))(\del^B_{r}\Phi)^{2}\\
+ \frac{1}{2}(1+2a\tau^{2a}_+ \tau_0+r^{2a}(1-{r \over t} ))|\snabla\Phi|^{2} 
 -(1+2a\tau^{2a}_+ \tau_0)(1-{r \over t} )\del^B_u\Phi\del_r^B\Phi.\notag
\end{multline}
Multiplying by $r^{-2}$, integrating,
and applying the equation \eqref{coercive_bdry1} yields
\begin{align}
& \int_{s=s'} {}^{(X)}\!P_{\alpha}N^\alpha  \, r^{-2}\Vol  
= \int_{s=s'} {}^{(X)}\!P_{\alpha}N{'}^\alpha  \, r^{-2}dx\label{boundary_Klow} \\
& \approx
\int_{s=s'}\Big(
 1+2a\tau^{2a}_+ \tau_0(1-{r \over t} )(\del^B_{u}\Phi)^2
+(1+2a\tau^{2a}_+ \tau_0+r^{2a}(1+ {r \over t} ))(\del^B_{r}\Phi)^{2} \notag \\
& \hspace{1in}  +(1+2a\tau^{2a}_+ \tau_0+r^{2a}(1-{r \over t} ))|\snabla\Phi|^{2} 
 \Big) \, r^{-2}dx  \notag 
 \hskip2.cm 
  \approx   \lp{r\phi(s)}{\Eonea}^2, 
\notag 
\end{align}
where we have used  \eqref{weight_basic1} in order to bound the weights on the last line.

\vskip.3cm

3. {\bf Source term.} Similarly to what was done before, we have now
\begin{align}
\big|\int_{s_0}^{s_1}\!\!\!\int_{s=s'}
F   \frac{X \Phi}{r}   \  \Vol ds'\big|  & \lesssim  \int_{s_0}^{s_1}\!\!\!\int_{s=s'}
 \big| F \tau^{2a}_+ \tau_0\frac{s}{t}\frac{\del^B_{u}\Phi}{r}  \big|  
 + \big| F \tau_+^{2a}\frac{s}{t}\frac{\del^B_{r}\Phi}{r} \big|  \  dxds' 
  \label{energy_l1_mult} \\ 
& \lesssim
 \lp{F}{N_{W}^a [s_0,s_1]} 
\sup_{s_0\leq s\leq s_1}\lp{r\phi(s)}{\Eonea}^2
\notag.
\end{align}

\vskip.3cm

4. {\bf Putting it all together.} The conclusion follows
from substituting the estimates \eqref{A_onehalf_low_pos}, \eqref{boundary_Klow},
and \eqref{energy_l1_mult} into \eqref{energy_Klow}
and simplifying the result.
\end{proof}

\begin{proof}[Proof of the estimate \eqref{est_weight2} for $ 0 <  a \leq 1$]
We now use the multiplier identity \eqref{div_identity1_cut}
with the choice \eqref{s_vf_choice}, that is, 
\[X=K = (1+u^{2})\del^B_{u}
+2(u+r)r\del^B_{r}, \qquad \Omega=s^2, 
\qquad \Lambda=s^{2a-2} . \]
Setting$\Phi=s^2\phi$ and using the formulas in line \eqref{AB_formulas1}, we obtain
\be
\aligned
& \int_{ s=s_1} \!\!
{}^{(X)} \td{P}_\alpha[\Phi] N^\alpha  s^{-4} \
\Vol    + \int_{s_0}^{s_1}\!\!\!\int_{s=s'} 
 \big(s^{2a-2}\Acal^{\alpha\beta}\del_\alpha
\Phi\del_\beta\Phi + 
s^{2-2a}\Ccal^{\alpha}{}^{(X)}\td{P}_{\alpha} \big)
 s^{-4} \  \Vol ds'  \label{energy_Kweight2}\\
& = \int_{ s=s_0}\!\! {}^{(X)} \td{P}_\alpha
[\Phi]
N^\alpha  s^{-4} \ \Vol  
-  \int_{s_0}^{s_1}\!\!\!\int_{s=s'} 
s^{2a-2}F   s^{-2}X\Phi  \  \Vol ds',
\endaligned
\ee
where we used Lemma \ref{vanish_V}
to see that $\Bcal(X, s^2)\equiv0$.

\vskip.3cm

1. {\bf  Terms $\Acal^{\alpha\beta}$.}
Since $\Omega= s^2$ we have
\begin{align}
\frac{K^{r}}{r}-K(\ln s^2 )& =
2(u+r)- \frac{1}{u\baru}(2(u+r)u^2+4(u+r)ur)
=0.\label{xs_term}
\end{align}
Combining this with the list of identities underneath the equation \eqref{xr_term} with $a=1$
we get
\be
\Acal^{uu}  =  0
\qquad
\Acal^{rr}  =   0,
\qquad
\Acal^{ur}  =  0,
\qquad
 \Acal^{bc} = 
\big( \frac{1}{2r}(\baru^{2}-u^{2})
-(\baru+u)\big)\frac{\delta^{bc}}{r^{2}} =  0, 
\ee
therefore 
\be
\label{A_weight2_sign}
A^{\alpha\beta}\del_\alpha
\Phi\del_\beta\Phi = 0.
\ee

\vskip.3cm

2. {\bf Term $\Ccal^{\alpha}$.}
Expanding
$
\Ccal^{\alpha} = \nabla^{\alpha}s^{2a-2}
=(2a-2)s^{2a-3}
\nabla^{\alpha}s, 
$
and since the case $a=1$ is trivial we may assume $0<a<1$.
We observe that the vector field
$- \nabla s=
- \eta^{\alpha\beta}\del_{\alpha}s\del_{\beta}
= \frac{t}{s}\del_t+ \frac{r}{s}\del_r
$
is future directed and causal.
In addition, the Morawetz vector field
$K$ is future directed and causal 
throughout the domain $\Kcal_{[s_0,+\infty)}$. Since
the classical energy momentum tensor $Q_{\alpha\beta}$
satisfies the dominant energy condition,  the sign condition
\be
 \label{error_weight2_sign}
\Ccal^{\alpha}{}^{(X)}\td{P}_{\alpha}s^{2-2a} =
\Ccal^{\alpha}{}^{(X)}{P}_{\alpha} =
(2-2a)s^{2a-3}
Q(- \nabla s, X) \geq 0
\ee
is
valid throughout the domain $\Kcal$.

\vskip.3cm

3. {\bf  Boundary terms.} 
We compute ${}^{(X)} \td P_{\alpha} N{'}^\alpha $
using the equation \eqref{normalv}.
Multiplying by $s^{-4}$, integrating,
and applying the equation \eqref{coercive_bdry1}, we find 
\be
\label{boundary_Kweight2}
\aligned
& \int_{s=s'} {}^{(X)}\!\td P_{\alpha}N^\alpha  \, s^{-4}\Vol  
= \int_{s=s'} {}^{(X)}\!\td P_{\alpha}N{'}^\alpha  \, s^{-4}dx 
 \\
& \approx
\int_{s=s'}s^{2a-2}_+ \Big(
 (1+u^{2})(1-{r \over t} )(\del^B_{u}\Phi)^2+
((1+u^{2})+(\baru^{2}-u^{2})(1+ {r \over t} ))(\del^B_{r}\Phi)^{2}\notag \\
& \qquad\qquad\qquad\qquad \qquad  +((1+u^{2})+(\baru^{2}-u^{2})
(1-{r \over t} ))|\snabla\Phi|^{2}  \Big) \, s^{-4}dx  
   \approx \lp{s^2\phi(s)}{\Etwoa}^2,
\endaligned
\ee
where we have used equation \eqref{pos_weight3} and
equation \eqref{weight_basic1}
to bound the weights on the last line.

\vskip.3cm

4. {\bf Source term.} Taking the absolute value and applying the Cauchy-Schwarz inequality, we get
\be\label{energy_h2_mult}
\aligned
& \big|\int_{s_0}^{s_1}\!\!\!\int_{s=s'}
F   s^{2a-2}_+ \frac{X\Phi}{s^2}   \  \Vol ds'\big|  
 \lesssim  \int_{s_0}^{s_1}\!\!\!\int_{s=s'}
 \big| F s^{2a-2}\tau^2_{-}\frac{s}{t}\frac{\del^B_{u}\Phi}{s^2} \big|  
 + \big| F s^{2a-2}\tau^2_+ \frac{s}{t}\frac{\del^B_{r}\Phi}{s^2} \big|  \  dxds' 
   \\ 
& \lesssim \lp{F}{N^a [s_0,s_1]} 
\sup_{s_0\leq s\leq s_1}\lp{s^2 \phi(s)}{\Etwoa}, 
\endaligned
\ee
where, for each of the weights, we used 
\[s^{2a-2}\tau^2_{-}\lesssim s^a (s^{a-1}\tau_-)
\lesssim  \tau_+^a (s^{a-1}\tau_-),
\qquad s^{2a-2}\tau^2_+ \frac{s}{t}\lesssim s^a(s^{a-1}\tau_+)
\lesssim \tau_+^a (s^{a-1}\tau_+).
 \]

\vskip.3cm

5. {\bf Putting it all together.} The desired concusion now follows
by substituting the estimates \eqref{A_weight2_sign},
\eqref{error_weight2_sign}, \eqref{boundary_Kweight2},
and \eqref{energy_h2_mult} into \eqref{energy_Kweight2}
and simplifying the result.
\end{proof}

\begin{lemma}[Combining the energies]
For all $s_0\leq s\leq s_1$, $ 0 < a \leq 1$
with $\Kcal_{[s_0,s_1]} \subset\Kcal$.
the lower bound holds: 
\begin{align}
\lp{\phi(s)}{E_{W}^{a} }  \lesssim   
 \lp{r\phi(s)}{\Eonea}+ \lp{s^2 \phi(s)}{\Etwoa}
+ \lp{\phi(s)}{E_{W}}\label{conjlemma}.
\end{align}
\end{lemma}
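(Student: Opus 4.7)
\medskip
\noindent\textbf{Approach.} The plan is to bound each of the three pieces making up $\lp{\phi(s)}{\Ewavea}$, namely the weighted $(s/t)\del_t\phi$ term, the $s^a \delu_x\phi$ term, and the zeroth-order term $a(s^a/t)\phi$, directly in terms of the three norms appearing on the right-hand side. The essential tools are: (a) the Bondi identities $\del_t\phi = \del^B_u(r\phi)/r$, $\del^B_r\phi = \del^B_r(s^2\phi)/s^2 - 2u\phi/s^2$, $\del^B_u(s^2\phi)/s^2 = \del_t\phi + 2t\phi/s^2$, together with $\delu_r = \del^B_r - (u/t)\del^B_u$; (b) the elementary pointwise inequalities valid in $\Kcal$, namely $s\leq \tau_+$, $t\leq \tau_+$, $\tau_-\leq s$, and $s/t\leq 1$; and (c) the Hardy inequality stated above. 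A recurring simplification is that on each fixed $\Hcal_s$ the factor $s^a$ is constant and can be pulled out of any $L^2(\Hcal_s)$ norm.

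\medskip
\noindent\textbf{The direct pieces.} The weighted $(s/t)\del_t\phi$ term is exactly the first component of $\lp{r\phi}{\Eonea}$ once we substitute $\del_t\phi = \del^B_u(r\phi)/r$. For the angular piece $s^a\snabla\phi$, we use $\tau_+\geq t$ in $\Kcal$ to get $s^{a-1}\tau_+(s/t)\geq s^a$ pointwise, so this term is controlled by the second component of $\lp{s^2\phi}{\Etwoa}$. The zeroth-order piece $a\lp{(s^a/t)\phi}{L^2}$ is handled by the Hardy inequality \eqref{hardy1}: since $t\geq r$ in $\Kcal$, one has $\lp{(s^a/t)\phi}{L^2(\Hcal_s)}\leq s^a\lp{\phi/r}{L^2(\Hcal_s)}\lesssim s^a\lp{\delu_x\phi}{L^2(\Hcal_s)} = \lp{s^a\delu_x\phi}{L^2(\Hcal_s)}$, which will be controlled by the preceding steps.

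\medskip
\noindent\textbf{The radial piece.} Writing $s^a\delu_r\phi = s^a\del^B_r\phi - s^a(u/t)\del_t\phi$ and substituting $\del^B_r\phi = \del^B_r(s^2\phi)/s^2 - 2u\phi/s^2$, the principal contribution $s^a\del^B_r(s^2\phi)/s^2$ is bounded by $\lp{s^{a-1}\tau_+\del^B_r(s^2\phi)/s^2}{L^2}\leq \lp{s^2\phi}{\Etwoa}$, using $s\leq \tau_+$. What remains are two residues: $\lp{s^{a-2}u\phi}{L^2}$, which we majorize by $\lp{s^{a-2}\tau_-\phi}{L^2}$ via $u\leq \tau_-$, and $\lp{(s/t)s^{a-1}u\del_t\phi}{L^2}$, handled analogously. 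To extract $\lp{s^{a-2}\tau_-\phi}{L^2}$, expand the first component of $\lp{s^2\phi}{\Etwoa}$:
\[
(1+s^{a-1}\tau_-)\frac{s}{t}\frac{\del^B_u(s^2\phi)}{s^2} = (1+s^{a-1}\tau_-)\!\left[\frac{2\phi}{s} + \frac{s}{t}\del_t\phi\right],
\]
so by the triangle inequality $\lp{(1+s^{a-1}\tau_-)\cdot 2\phi/s}{L^2}\leq \lp{s^2\phi}{\Etwoa} + \lp{(1+s^{a-1}\tau_-)(s/t)\del_t\phi}{L^2}$. The pointwise comparison
\[
1 + s^{a-1}\tau_- \lesssim 1 + a\tau_+^a \tau_0^{\max(a,1/2)},
\]
verified by distinguishing the interior region (where $\tau_-\approx s\approx \tau_+$) from the wave domain (where $\tau_-\approx u$, $\tau_+\approx t$, $s\approx \sqrt{tu}$) and using $\tau_-\leq s$, lets us absorb the $\del_t\phi$ term into the first component of $\lp{r\phi}{\Eonea}$ plus $\lp{\phi}{\Ewave}$. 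This yields $\lp{s^{a-2}\tau_-\phi}{L^2}\lesssim \lp{r\phi}{\Eonea}+\lp{s^2\phi}{\Etwoa}+\lp{\phi}{\Ewave}$, and the same pointwise comparison disposes of the $(u/t)\del_t\phi$ residue.

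\medskip
\noindent\textbf{Main obstacle.} The technical crux is the weight comparison displayed above; it is where the $\tau_-\leq s$ geometry of $\Kcal$ plays an essential role and where the choice of exponent $\max(a,1/2)$ in the definition of $\Ewavea$ is tailored precisely to the residue produced by $\del^B_r(s^2\phi)/s^2$. The rest of the argument is assembled by triangle inequalities and weight bookkeeping; no additional structural input is needed.
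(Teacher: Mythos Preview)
Your proposal is correct and follows essentially the same route as the paper. Both arguments rest on the same three ingredients: the conjugated Bondi identities for $\del_t(r\phi)/r$, $\del_t(s^2\phi)/s^2$, $\del^B_r(s^2\phi)/s^2$; the decomposition $\delu_r=\del^B_r-(u/t)\del_t$; and the pointwise weight comparison $s^{a-1}\tau_-\lesssim \tau_+^{a}\tau_0^{\max(a,1/2)}$ in $\Kcal$ (split into the cases $a\geq 1/2$ and $a<1/2$ exactly as you sketch).

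The only organizational difference is in the zeroth-order term $a\lp{(s^a/t)\phi}{L^2}$. The paper isolates it directly by \emph{subtracting} the two conjugated time derivatives, $\frac{1}{s^2}\del_t(s^2\phi)-\frac{1}{r}\del_t(r\phi)=\frac{2t}{s^2}\phi$, and then multiplying by $s^{a-1}u(s/t)$. You instead invoke Hardy's inequality to reduce $\lp{(s^a/t)\phi}{L^2}$ to $\lp{s^a\delu_x\phi}{L^2}$, which you have already bounded. Both routes are valid; yours is slightly less explicit but avoids the subtraction trick, while the paper's is more self-contained since it never calls on Hardy. Either way the crux is the weight comparison you correctly identified.
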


\begin{proof}
By recalling that $\del_t= \del^B_u$ and $\del^B_r= \del_t+ \del_r$, a simple computation yields the identities
\begin{align*}
& \frac{1}{r}\del^B_r(r\phi)
= \del^B_r\phi+ \frac{ \phi}{r},
&& \frac{1}{r}\del_t (r\phi)
= \del_t\phi , \\
& \frac{1}{s^2}\del^B_r(s^2\phi)= \del^B_r\phi
+ \frac{2}{\underline u}\phi,
&& \frac{1}{s^2}\del_t (s^2\phi)
= \del_t \phi+ \frac{2t}{\underline u u}\phi.
\end{align*}
We multiply the two identities in the right-hand side above by the weight $(s^a u/t)$
and subtract, namely 
\begin{align*}
s^{a-1}\frac{s}{t}u\Big(\frac{1}{s^2}\del_t (s^2\phi)- \frac{1}{r}\del_t (r\phi)\Big)
= \frac{s^a }{t} \, \frac{2t}{\underline u }\phi.
\end{align*}
Squaring both sides, expanding, using Young inequality and the fact that $1\leq 2t/\underline u\leq 2$
gives us
\be
\big|\frac{s^a  \phi}{t}\big|^2\lesssim
s^{2a-2}(1+u^2)\big|
\frac{s}{t}\cdot \frac{1}{s^2}\del_t(s^2\phi)\big|^2
+ \tau^{2a}_+ \tau^{\max\{2a,1\}}_{0}\big|\frac{s}{t} \, \frac{1}{r}\del_t(r\phi)\big|^{2}
  \label{conjlemma2}, 
\ee
where on the weight for  the last term in the right-hand side  we used the following  inequalities valid in the domain $\Kcal_{[s_0,+\infty)}$:
\begin{align*}
& \frac{s^a u}{t} =s^{a-1}u\frac{s}{t}
= u^a (\frac{u}{s})^{1-a}\frac{s}{t}\leq u^a \frac{s}{t},  
&& \frac{1}{2}\leq a \leq 1,
\\
& \frac{s^a u}{t}
\leq \tau_+^a \frac{u}{s}\frac{s}{t}
= \tau_+^a \big(\frac{u}{\underline u} \big)^{1/2}\frac{s}{t}
\lesssim \tau_+^a \tau_0^{1/2}\frac{s}{t},  
&&0< a< \frac{1}{2}. 
\end{align*}
Thus we have the lower bound
\begin{align}
\tau^{2a}_+ \tau^{\max\{2a,1\}}_{0}\big| \frac{s}{t}\del_t \phi\big|^2+ \big|\frac{s^a  \phi}{t}\big|^2
\lesssim s^{2a-2}\tau_-^2\big| \frac{s}{t} \frac{1}{s^2}\del_t(s^2\phi)\big|^2
+ \tau^{2a}_+ \tau^{\max\{2a,1\}}_{0}\big| \frac{s}{t} \frac{1}{r}\del_t(r\phi)\big|^2.
\label{conjlemma1}
\end{align}
Next we derive a bound for tangential derivatives to the hyperboloids $\underline\nabla_x$.
Observe that
\[\del^B_r= \del_t+ \del_r= \big(1-{r \over t} \big)\del_t+ \big({r \over t} \del_t+ \del_r\big)
= \big(\frac{u}{t} \big)\del_t+ \underline \del_r.\]
Multiplying by $s^{a-1}\underline u(s/t)$, conjugating by the weight $\Omega_{s}=s^2$
and rearranging yields
\be
s^a\frac{\underline u}{t}\frac{\underline \del_r(s^2\phi)}{s^2}=
s^{a-1}\underline u\frac{s}{t} \big(\frac{\del^B_r(s^2\phi)}{s^2} \big) -
\frac{s^{a-1} u\underline u}{t}\big(\frac{s}{t}\frac{1}{s^2}\del_t (s^2\phi)\big).
\label{conjlemma3}
\ee
Squaring both sides, using Young inequality and the fact that  $\underline \del_r(s)=0$
together with the bound $1\leq \underline u/t\leq 2$ gives the estimate
\begin{align}
s^{2a}\big|\underline \del_r\phi\big|^2
\lesssim s^{2a-2}\tau_-^2\big| \frac{s}{t}\cdot \frac{1}{s^2}\del_t(s^2\phi)\big|^2+
 s^{2a-2}\tau_+^2\big|\frac{\del^B_r(s^2\phi)}{s^2}\big|^2. \label{conjlemma4}
\end{align}

To add the angular
derivatives we observe that, for any function on a fixed hyperboloid $s=const$, we have
$\del_i\phi(s)= \underline\del_i\phi(s)$. Therefore, after integrating on hyperboloids we can identify
$\underline\snabla\phi(s)= \snabla\phi(s)$. Consequently, adding \eqref{conjlemma1}
 and \eqref{conjlemma4}, integrating, and using this observation completes the
proof of inequality \eqref{conjlemma}.
\end{proof}

\begin{remark}
There is a small penalty for removing the conjugation and
going back to the semi-hyperboloidal frame. This is to be expected since we require
a weight of $(s/t)$ to use the $\del_t$ derivative in $L^{2}(\mathcal{H}_{s})$.
\end{remark}


\section{Boundeness property for the wave-Klein-Gordon model} 
\label{section6}

\subsection{Main linear estimates}

We are now in a position to combine our results so far and establish \eqref{frac_conf_wave1}, \eqref{frac_conf_wave2},
and \eqref{point1}.
For each of these inequalities, we first prove the case $k=0$
then add regularity by commuting the equation with the 
vector fields in the Lie algebra $\Lbb$. Using
the commutation relations
\begin{align}
&\left[\Box, \del^{I_0}_0\del^{I_1}_1\del^{I_2}_2\del^{I_3}_3 \right]=0,
&& I_0+ I_1+ I_2+ I_3 = |I|, 
\label{comm_trivial1}
\\
&\big[\Box, L^{J_1}_1 L^{J_2}_{2} L^{J_3}_{3 }\big]=0,
 &&J_1+ J_2+ J_3 = |J|,\label{comm_trivial2}
\end{align}
where $|I|+|J|=k$ then yields each corresponding result for $k=0,1, \ldots$

\begin{proof}[Proof of Theorem \ref{theor-main_estimate}]
The estimate \eqref{frac_conf_wave1} follows by adding
estimates \eqref{energy_est1}+\eqref{est_weight1}+\eqref{est_weight2}. 
The estimate \eqref{frac_conf_wave1} for each $k=0,1, \ldots$ follows, via induction, by commuting
the equation with products of vector fields in $\Lbb$
and applying the relations \eqref{comm_trivial1} and \eqref{comm_trivial2}.
\end{proof}

\begin{proof}[Proof of Corollary \ref{theor-main_cor}]

Estimate \eqref{frac_conf_wave2} in the case $k=0$ follows
by applying estimate \eqref{conjlemma} to the left-hand side of 
\eqref{frac_conf_wave1} with
$k=0$. The estimate \eqref{frac_conf_wave2}
for each $k=0,1, \ldots$ then follows, via induction, by commuting
the equation with products of vector fields in $\Lbb$ and using the relations
\eqref{comm_trivial1} and \eqref{comm_trivial2}.

On the other hand, the estimate \eqref{point1} in the case $k=0$ follows
by using estimate \eqref{KlSob} followed by
an application of the estimate \eqref{frac_conf_wave2}
with $k=2$. The estimate \eqref{point1}
for each $k=0,1, \ldots$ then follows, via induction, by commuting
the equation with products of vector fields in $\Lbb$ and using the relations
\eqref{comm_trivial1} and \eqref{comm_trivial2}.
\end{proof}


\subsection{Closing the bootstrap argument} 

We consider the local-in-time solution $(u,v)$ to the Cauchy problem associated with the system \eqref{eq main}
and, as usual, we proceed by bootstrap and assume that this solution is defined up to some hyperboloidal time 
$s_1 > s_0$ which is chosen to be maximal for the bounds under consideration. That is, the following energy bounds\footnote{In the high-order energy bound for the wave equation, the exponent $\max(1,k)) \delta$ provides a small growth of order $\delta$ for {\sl all} $k$, and we refer the reader to Appendix A in \cite{PLF-YM-1}.}
 are assumed on the interval $[s_0, s_1]$ (for some constants $C_1, \eps, \delta>0$, yet to be determined):  
\bel{ineq energy assumption}
\aligned
& \Ewavea(s,\del^IL^J u)^{1/2}\leq C_1\eps s^{(\max(a-1/2,0)  + \max(1,k)) \delta}, 
&&|J|=k, \quad &&&|I|+|J|\leq N, \qquad 
\\
& \Ewavea(s,\del^IL^J u)^{1/2}\leq C_1\eps s^{\max(a-1/2,0) \delta}, 
&& \quad &&&|I|+|J|\leq N-4,  
\\
& \EKG(s,\del^IL^J v)^{1/2}\leq C_1\eps s^{(\max(-a+1/2,0) + k)\delta}, 
&&|J|=k, \quad &&&|I|+|J|\leq N,  
\\
& \EKG(s,\del^IL^J v)^{1/2}\leq C_1\eps s^{k\delta}, 
&&|J|=k, \quad &&&|I|+|J|\leq N-4.  
\endaligned
\ee
We will establish improved bounds, with $C_1$ replaced by $C_1/2$, on the same interval of time (provided $\eps$ is sufficiently small and the constants are suitably chosen).

The existence theory for the model \eqref{eq main} was already established in \cite{PLF-YM-1} and what we need to check here is that the additional decay in space (assumed on the initial data) is propagated in time. The proof in \cite{PLF-YM-1} requires some adaptation which is now discussed. It is unnecessary to rewrite the proof completely, and we now explain the additional arguments (for any given $0\leq a \leq 1$).  
\bei

\item {\bf Commutator in the Klein-Gordon equation.} Let us consider the terms $u \del_\alpha\del_\beta v$ arising in the Klein-Gordon equation. We now control the wave component in a suitably weighted $L^2$ norm, 
namely the norm $\lp{\frac{s^a}{\tau_+}u}{L^2(\Hcal_s)}$.
Hence, the higher-order commutator terms for the Klein-Gordon equation can now be estimated as follows, in the case $N_1 \geq N_2$ (even when $N_1=N$ the maximum number of vector fields) and 

For any combination of boosts and translations $Z^{N} =\partial^{I}L^{J}$ with $|I + |J| \leq N$, we consider the quadratic expression 
$Z^{N_1} u \del^{2}Z^{N_2}v$ 
\be
\aligned
\lp{Z^{N_1} u \del^{2}Z^{N_2}v}{L^2(\Hcal_s)}
& \lesssim  
\lp{\frac{s^a}{\tau_+} Z^{N_1}u}{L^2(\Hcal_s)}
\, 
\lp{\frac{\tau_+}{s^a}\del^2Z^{N_2}v}{L^\infty(\Hcal_s)} 
\\
& \lesssim  
\lp{\frac{s^a}{\tau_+} Z^{N_1}u}{L^2(\Hcal_s)}
\,  \frac{\tau_+}{s^{a+1/2}t}
 = \lp{\frac{s^a}{\tau_+} Z^{N_1}u}{L^2(\Hcal_s)}
   \, \frac{1}{s^{a+1/2}}, 
\endaligned
\ee
where, in the second inequality, we have used the $L^\infty$ decay enjoyed by the Klein-Gordon component. 

\item {\bf First-order quadratic nonlinearities in the wave equation.} Let us consider next the terms $\del_\alpha v\del_\beta v$ arising in the wave equation, and recall that our norm involves the weight $s^{a-1}\tau_+$. 
Independently from the number of vector fields applied to each term, one of the two copies of the Klein-Gordon component is always estimated in $L^\infty$, hence we write for general exponents $N_1, N_2$ 
\begin{align*}
& \lp{s^{a-1}\tau_+ Z^{N_1}\del v Z^{N_2}\del v}{L^2(\Hcal_s)}
\lesssim  
\lp{\del Z^{N_1} v}{L^2(\Hcal_s)}
\lp{s^{a-1}\tau_+ \del Z^{N_2}v}{L^\infty(\Hcal_s)}
\\
&  \lesssim  
\lp{\del Z^{N_1} v}{L^2(\Hcal_s)}\frac{s^{a-1}\tau_+}{s^{1/2}t}
   = \lp{\del Z^{N_1} v}{L^2(\Hcal_s)}\frac{1}{s^{3/2-a}}. 
\end{align*}

\item {\bf Quadratic nonlinearity in the wave equation.} 
Similarly, for the wave equation again we write 
\be
\aligned
& \lp{s^{a-1}\tau_+ Z^{N_1} v Z^{N} v}{L^2(\Hcal_s)}
\lesssim  
\lp{ Z^{N_1} v} {L^2(\Hcal_s)} 
\lp{s^{a-1}\tau_+ Z^{N}v}{L^\infty(\Hcal_s)}
\\
& \lesssim  \lp{Z^{M} v}{L^2(\Hcal_s)}\frac{s^{a-1}\tau_+}{s^{1/2}t}
   = \lp{Z^{M} v}{L^2(\Hcal_s)}\frac{1}{s^{3/2-a}}. 
\endaligned
\ee
\eei 
Hence, for all nonlinearities of interest we can control the source-terms of both the wave and the Klein-Gordon equations and the remaining arguments  in \cite{PLF-YM-1} applies.  


\small
 
\paragraph*{Acknowledgments.} 
The first author (PLF) was partially supported by the ERC Innovative Training Network grant 642768. Part of this work was done when the first author was visiting the School of Mathematical Sciences at Fudan University, Shanghai, and the Courant Institute of Mathematical Sciences, New York University. The first author (PLF) is very grateful to the third author (YT) for the opportunity of visiting Kyoto University (where this paper was completed) as an international research fellow of the Japan Society for the Promotion of Science (JSPS). The second author (JO) gratefully acknowledges support from Institut Henri Poincar\'e, Paris.  



\small

\appendix 

\section{Proof of technical results}
\label{Append-A}

\begin{proof}[Proof of Lemma~\ref{moved-to-Append-A1}]
To establish the equation \eqref{bondi_deften_A1} we expand $X^{i} = \overline X^i+ \omega^i \omega_j X^j$
\be
\del^B_\gamma X^\gamma  = \del^B_u X^u + 
 \del^B_i(\omega^{i})\omega_{j} X^j
 + \omega^{i}\del^B_i(\omega^{i}X^j)
 + \del^B_i\overline{X}^i = 
\del^B_u X^u +  \frac{2X^{r}}{r}+ \del^B_r X^r+ \del^B_i\overline{X}^i 
\notag.
\ee
Using this together with the basic identity
${}^{(X)}\widehat{\pi}^{\alpha\beta}  =  - \mathcal{L}_{X}
\eta^{-1}- \eta^{\alpha\beta }\del_\gamma X^\gamma$,  
in \eqref{pi_hat_formula} gives the result.
To calculate all the components of $\Acal$ we use
Eq. \eqref{bondi_deften_A1} together with the equation \eqref{pi_hat_formula} to get
$$
\Acal^{\alpha\beta}  =   \frac{1}{2}\Big(
\del_\gamma^B(X^\alpha) \eta^{\gamma\beta}
+ \del_\gamma^B(X^\beta) \eta^{\alpha\gamma}
-X( \eta^{\alpha\beta})
-({\del_u^B} X^u + \del_r^B X^r  )\eta^{\alpha\beta}
\Big) - \Big( \frac{X^r}{r}-X \ln \Omega) \Big)\eta^{\alpha\beta}.
$$
We now compute all the components of this tensor. For $\Acal^{uu}$ we use the fact that
$\eta^{u\alpha} =-1$ if $\alpha=r$, and $\eta^{u\alpha} =0$ otherwise. This gives
\begin{align*}
\Acal^{uu}  & =   \frac{1}{2}\Big(
\del_\gamma^B(X^u) \eta^{\gamma u}
+ \del_\gamma^B(X^u) \eta^{\gamma u}
-X( \eta^{uu})
-({\del_u^B} X^u + \del_r^B X^r  )\eta^{uu}
\Big) - \Big( \frac{X^r}{r}-X \ln \Omega \Big) )\eta^{uu} 
 =   -   \del_r^B X^u.
\end{align*}
For $\Acal^{rr}$ we use the fact that
$\eta^{ur} =-1$, $\eta^{rr} =1$, and $\eta^{r\alpha} =0$ otherwise. This yields
\begin{align*}
\Acal^{rr}  & =   \frac{1}{2}\big(
\del_\gamma^B(X^r) \eta^{\gamma r}
+ \del_\gamma^B(X^r) \eta^{\gamma r}
-X( \eta^{rr})
-({\del_u^B} X^u + \del_r^B X^r  )\eta^{rr}
\big) - \Big( \frac{X^r}{r}-X \ln \Omega \Big) )\eta^{rr} \\
& = \frac{1}{2}( 2\del_r^BX^r
-2\del_u^BX^r )-({\del_u^B} X^u + \del_r^B X^r  )
\big) - \Big( \frac{X^r}{r}-X \ln \Omega \Big) )   \\
& = \frac{1}{2}\del_r^B X^r - \frac{1}{2}{\del_u^B} X^u-  {\del_u^B} X^r
- \Big( \frac{X^r}{r}-X \ln \Omega \Big) ).
\end{align*}
Similarly, for $\Acal^{ur}$ we get
\begin{align*}
\Acal^{ur}  
& =   \frac{1}{2}\big(
\del_\gamma^B(X^u) \eta^{\gamma r}
+ \del_\gamma^B(X^r) \eta^{\gamma u}
-X( \eta^{ur})
-({\del_u^B} X^u + \del_r^B X^r  )\eta^{ur}
\big) - \Big( \frac{X^r}{r}-X \ln \Omega \Big) )\eta^{ur} 
\\
& =   \frac{1}{2} \Big(- \del_u^B X^u+ \del_r^B X^u 
- \del_r^B X^r +{\del_u^B} X^u + \del_r^B X^r  \Big)
+\Big( \frac{X^r}{r}-X \ln \Omega \Big)   
  =  \frac{1}{2}\del_r^B X^u+\Big( \frac{X^r}{r}-X \ln \Omega \Big).
\end{align*}
For $\Acal^{bc}$ we use the hypothesis that our vector field
is radial and get
\begin{align*}
\Acal^{bc}  & =   \frac{1}{2}\Big(
\del_\gamma^B(X^b) \eta^{\gamma c}
+ \del_\gamma^B(X^c) \eta^{\gamma b}
-X( \eta^{bc})
-({\del_u^B} X^u + \del_r^B X^r  \Big)\eta^{bc}
\big) - \Big( \frac{X^r}{r}-X \ln \Omega \Big) )\eta^{bc} \\
& =   \Big(\frac{X^{r}}{r}
- \frac{1}{2}{\del_u^B} X^u- \frac{1}{2}\del_r^B X^r
- \Big( \frac{X^r}{r}-X \ln \Omega \Big) \Big)\frac{\delta^{bc}}{r^{2}}.
\qedhere
\end{align*}
\end{proof}


\begin{proof}[Proof of Lemma~\ref{lem-post}]
By Lemma \ref{vanish_V}, the conformal potential
$V$ identically vanishes for the choices of weight
${}^{I}\Omega$ and ${}^{I\!I}\Omega$. Therefore we have 
$
Q_{\alpha\beta}[\Phi]= \td Q_{\alpha\beta}[\Phi], 
$
where $Q$ is energy-momentum tensor in
Eq. \eqref{em_tensor1}. Expanding
this tensor in polar Bondi coordinates and
using the equation \eqref{metric_bondi1} for
the metric coefficients gives:
\begin{align*}
&Q_{uu}[\Phi] =
(\del^B_{u}\Phi)^{2}+ \frac{1}{2}
(\del^B_{r}\Phi)^{2}
+ \frac{1}{2}|\snabla\Phi|^{2}- \del^B_{u}\Phi
\del^B_{r}\Phi,
 \\ 
&Q_{ur}[\Phi] =
\frac{1}{2}
(\del^B_{r}\Phi)^{2}
+ \frac{1}{2}|\snabla\Phi|^{2}, \qquad
Q_{rr}[\Phi] =
(\del^B_{r}\Phi)^{2}.
\end{align*}
Since the rescaled normal
is $N{'} =(1-{r \over t} )\del^B_{u}+ {r \over t} \del^B_{r}$
we use these identities to get:
\begin{align*}
{}^{(X)}\!\td P_{\alpha} N{'}^\alpha  
& =X^{u}(1-{r \over t} )Q_{uu}[\Phi]+X^{u}{r \over t} Q_{ur}[\Phi]
 +X^{r}(1-{r \over t} )Q_{ru}[\Phi]+X^{r}{r \over t} Q_{rr}[\Phi]
 + \frac{\Lambda}{\Omega^2} \Bcal\Phi^2\\
 & =X^{u}\big[(1-{r \over t} )\big((\del^B_{u}\Phi)^2+
\frac{1}{2}(\del^B_{r}\Phi)^2+ \frac{1}{2}|\snabla\Phi|^2
- \del^B_u\Phi\del_r^B\Phi\big)
+{r \over t} \big(\frac{1}{2}(\del^B_{r}\Phi)^{2}
+ \frac{1}{2}|\snabla\Phi|^{2} \big)\big] \\
& \qquad \qquad + X^{r}\big[(1-{r \over t} )(\frac{1}{2}
(\del^B_{r}\Phi)^{2}
+ \frac{1}{2}|\snabla\Phi|^{2})+ {r \over t} (\del^B_{r}\Phi)^{2}
\big]+ \frac{\Lambda}{\Omega^2} \Bcal\Phi^2.
\end{align*}
Combining terms yields the equation \eqref{form_bdry} immediately.
To prove a coercive lower bound we apply Young inequality
with $\eps$ to the unsigned term
$- \del^B_u\Phi\del_r^B\Phi$
in the second equation. This gives us:
\begin{align}
{}^{(X)}\! \td P_{\alpha} N{'}^\alpha  & \geq 
X^{u}(1-{r \over t} )
\big[(1- \frac{1}{2\eps^{2}})(\del^B_{u}\Phi)^2+
(\frac{1}{2}- \frac{1}{2}\eps^{2})(\del^B_{r}\Phi)^2\big]
+ \frac{1}{2}X^{u}({r \over t} (\del^B_{r}\Phi)^2+|\snabla\Phi|^2)
\label{coercive_bdry2} \\
& \qquad\qquad  + \frac{1}{2}X^{r}\big[(1+ {r \over t} )
(\del^B_{r}\Phi)^{2}
+(1-{r \over t} )|\snabla\Phi|^{2})
\big]+ \frac{\Lambda}{\Omega^2} \Bcal\Phi^2 \notag.
\end{align}
Choosing $\eps$ satisfying the property $\frac{1}{2}<\eps^{2}<1$
and using the positivity of $X^{u}$
yields the coercive bound
\begin{align*}
{}^{(X)}\! \td P_{\alpha} N{'}^\alpha 
\gtrsim_{\eps} X^{u}(1-{r \over t} )
(\del^B_{u}\Phi)^2
+ \big(X^{u}+X^{r}(1+ {r \over t} )\big)(\del^B_{r}\Phi)^2
+ \big(X^{u}+X^{r}(1-{r \over t} )\big)|\snabla\Phi|^2 
+ \frac{\Lambda}{\Omega^2} \Bcal\Phi^2, 
\end{align*}
where the implicit constant depends only on $\eps$.
The upper bound follows trivially from applying
Young inequality to the right-hand side of the equation \eqref{form_bdry}.
This completes
the derivation of \eqref{coercive_bdry1}.
\end{proof}

\end{document}